\newtheorem{theorem}{Theorem}[section]
\newtheorem{lma}[theorem]{Lemma}
\newtheorem{prop}[theorem]{Proposition}
\newtheorem{cor}[theorem]{Corollary}
\theoremstyle{definition}
\newtheorem{defn}[theorem]{Definition}
\newtheorem{rmk}[theorem]{Remark}
\newcommand{\C}{\mathbb{C}}
\newcommand{\id}{\mathord{{\rm id}}}
\newcommand{\im}{\mathop{{\rm Im}}\nolimits}
\def\imm{\mathop{{\rm imm}}\nolimits}
\def\miximm{\mathop{{\rm miximm}}\nolimits}
\def\per{\mathop{{\rm per}}\nolimits}
\title{The norm of the $k$-th derivative of the $\chi$-symmetric power of an operator}
 \author{
S\'onia Carvalho\thanks{Centro de Estruturas Lineares e Combinat\'oria da Universidade de Lisboa, Av Prof Gama Pinto 2, P-1649-003 Lisboa and Departamento de Matem\'atica do ISEL, Rua Conselheiro Em\'\i dio Navarro 1, 1959-007 Lisbon, Portugal (scarvalho@adm.isel.pt).}
\and
Pedro J.\ Freitas\thanks{Centro de Estruturas Lineares e Combinat\'oria, Av Prof Gama Pinto 2, P-1649-003 Lisboa and Departamento de Matem\'atica da Faculdade de Ci\^encias, Campo Grande,
Edif\'\i cio C6, piso 2, P-1749-016 Lisboa. Universidade de Lisboa (pedro@ptmat.fc.ul.pt).}
 }
 \date{April, 2013}
\begin{document}

\maketitle

\begin{abstract}
In this paper we present the exact value for the norm of directional derivatives, of all orders, for symmetric tensor powers of operators on finite dimensional vector spaces. Using this result we obtain an upper bound for the norm of all directional derivatives of immanants. 

This work is inspired in results by R.\ Bhatia, J.\ Dias da Silva, P.\ Grover and T.\ Jain.
\end{abstract} 



\section{Introduction}

Let $V$ and $U$ be finite-dimensional complex vector spaces, and denote by ${\cal L}(V)$ the vector space of linear operators from $V$ to itself. The $k$-th derivative of a map $f:V\to U$ is a multilinear map $D^k f(T)$ from $(\mathcal{L}(V))^k$ to $\mathcal{L}(U)$ defined as 
$$ D^kf(T)(X^1,\ldots ,X^k)= \dfrac{\partial}{\partial t_1\ldots \partial t_k}\Big \vert_{t_1=\ldots =t_k=0}f(T+t_1X^1+\ldots +t_kX^k).$$
The norm of a multilinear operator $\Phi:(\mathcal{L}(V))^k\longrightarrow\mathcal{L}(U) $ is given by $$ \Vert \Phi \Vert = \sup_ { \Vert X^1 \Vert =\ldots =\Vert X^k \Vert =1 } \Vert \Phi (X^1,\ldots ,X^k) \Vert.$$

We will obtain exact values for the norm of $k$-th derivative of the operator $f(T)=K_{\chi}(T)$, where $K_{\chi}(T)$ represents the $\chi$-symmetric tensor power of the operator $T$, that is, the restriction of the operator $\otimes^m T$ to the subspace of $\chi$-symmetric tensors, which we will denote by $V_\chi$.\medskip

Let $\nu_1,\ldots ,\nu_n$ be the singular values of the operator $T$ and denote by $p_t$ the symmetric polynomial of degree $t$ (the number of variables to be specified). In the papers \cite{J}, \cite{G} and \cite{BGJ} the following values were obtained:
$$\Vert D^k \otimes^m T \Vert = \Vert D^k \vee^m T \Vert = \dfrac{m!}{(m-k)!}\Vert T \Vert ^{m-k}=\dfrac{m!}{(m-k)!}\nu_1^{m-k}$$
$$\Vert D^k \wedge^m T \Vert= k!\, p_{m-k}(\nu_1,\ldots ,\nu_m).$$

On the other hand, R.\ Bhatia and J.\ Dias da Silva established in \cite{BDS} a formula for the norm of the first derivative of the $\chi$-symmetric tensor power of an operator. It is again $k!$ times of the value of the elementary symmetric polynomial of degree $m-1$ taken on $m$ singular values of the operator $T$. In all cases, we note that the norm is the value of the elementary symmetric polynomial of degree $m-k$ applied to a certain family of $m$ singular values of $T$ (eventually with repetitions), multiplied by $k!$. 

In this paper we will present a result that generalizes all these cases. We use techniques developed in \cite{BGJ} and \cite{BDS} and a result from \cite{CF}. 

\section{Results on $\chi$-symmetric tensor powers}

We now present some classic facts and notation about $\chi$-symmetric powers that can be found in \cite[chapter 6]{Me}. Let $S_m$ be the symmetric group of degree $m$, $\chi$ an irreducible character of $S_m$ and define $K_\chi \in {\cal L}(\otimes^m V)$ as
$$K_{\chi}=\dfrac{\chi(\id)}{m!}\displaystyle \sum_{\sigma \in S_m} \chi(\sigma)P(\sigma),$$ 
where $\id$ stands for the identity element of $S_m$ and $P(\sigma)(v_1\otimes\ldots \otimes v_m) = v_{\sigma^{-1}(1)}\otimes \ldots \otimes  v_{\sigma^{-1}(m)}$.
The range of $K_{\chi}$ is called the {\em symmetry class of tensors}\/ associated with the irreducible character $\chi$ and it is represented by $V_{\chi}=K_\chi(\otimes^m V)$. We denote
$$v_1\ast v_2 \ast\ldots  \ast v_m=K_{\chi}(v_1 \otimes v_2 \otimes\ldots  \otimes v_m).$$
These vectors belong to $V_{\chi}$ and are called {\em decomposable symmetrised tensors}.

Given $T\in {\cal L}(V)$, it is known that $V_\chi$ is an invariant subspace for $\otimes^m T$. 
We define the {\it $\chi$-symmetric tensor power of $T$} as the restriction of $\otimes^m T$ to $V_\chi$, and denote it by $K_\chi(T)$. \medskip

Let $\Gamma_{m,n}$ be the set of all maps from the set $ \lbrace 1,\ldots ,m\rbrace$ into the set $ \lbrace 1,\ldots ,n\rbrace$. This set can also be identified with the collection of multiindices $\lbrace (i_1,\ldots ,i_m): i_j \leq n \rbrace$. If $\alpha \in\Gamma_{m,n}$, this correspondence associates to $\alpha$ the $m$-tuple $(\alpha(1),\ldots ,\alpha(m))$. In the set $\Gamma_{m,n}$ we will consider the lexicographic order.
The group $S_m$ acts on $\Gamma_{m,n}$ by the action $(\sigma, \alpha) \longrightarrow \alpha \sigma^{-1}$ where $\sigma \in S_m$ and $\alpha \in \Gamma_{m,n}$. The set 
$$ \lbrace \alpha \sigma : \sigma \in S_m \rbrace \subseteq \Gamma_{m,n}$$
is then the orbit of $\alpha$ and the stabilizer of $\alpha$ is the following subgroup of $S_m$:
$$ G_{\alpha}=\lbrace \sigma \in S_m : \alpha \sigma= \alpha \rbrace.$$
Let $\lbrace e_1,\ldots ,e_n \rbrace$ be an orthonormal basis of the vector space $V$. Then $$\lbrace e_{\alpha}^\otimes=e_{\alpha(1)}\otimes e_{\alpha(2)}\otimes\ldots  \otimes e_{\alpha(m)}: \alpha \in \Gamma_{m,n} \rbrace$$ is a basis of the $m$-th tensor power of $V$. So, by the definition of the space $V_\chi$, the set $$\lbrace e^*_{\alpha}=K_\chi( e_{\alpha}^\otimes):  \alpha \in \Gamma_{m,n} \rbrace$$  spans $V_{\chi}$. However, this set need not be a basis of $V_{\chi}$, because its elements might not be linearly independent, some of them may even be zero. Let 
\begin{equation}
\label{omega_chi}
\Omega =\Omega_{\chi}= \lbrace \alpha \in \Gamma_{m,n} : \sum_{\sigma \in G_{\alpha}}\chi(\sigma) \neq 0 \rbrace.
\end{equation}
With simple calculations, we can conclude that 
\begin{equation} 
\Vert e_{\alpha}^{\ast}\Vert^2 = \dfrac{\chi(\id)}{m!}\sum_{\sigma \in G_{\alpha}}\chi(\sigma).
\label{norma-tensores}
\end{equation}
So the nonzero decomposable symmetrised tensors are $\lbrace e_{\alpha}^{\ast}: \alpha \in \Omega \rbrace.$
Now, let $\Delta$ be the system of distinct representatives for the quocient set
$\Gamma_{m,n}/{S_m}$, constructed by choosing the first element in each orbit, for the lexicographic order of indices. It is easy to check that $\Delta \subseteq G_{m,n}$, where $G_{m,n}$ is the set of all increasing sequences of $\Gamma_{m,n}.$ Let $$\overline{\Delta}=\Delta\cap \Omega.$$ It can be proved that the set $\lbrace e_{\alpha}^{\ast}: \alpha \in \overline{\Delta} \rbrace$ is linearly independent. We have already seen that the set $\lbrace e_{\alpha}^{\ast}: \alpha \in \Omega \rbrace,$ spans $V_{\chi}$, so there is a set $\widehat{\Delta}$, such that $\overline{\Delta}\subseteq \widehat{\Delta} \subseteq \Omega$ and 
\begin{equation}
\lbrace e_{\alpha}^{\ast}: \alpha \in \widehat{\Delta} \rbrace,
\end{equation}
is a basis for $V_{\chi}$. It is also known that this basis is orthogonal if $\chi$ is a linear character. If $\chi$ is the principal character, $\widehat \Delta= \overline{\Delta} = G_{m,n}$ and if $\chi$ is the alternating character, $\widehat \Delta = \overline{\Delta}= Q_{m,n}$. In both cases the corresponding bases are orthogonal.\medskip

A partition $\pi$ of $m$ is an $r$-tuple of positive integers $\pi=(\pi_1,\ldots ,\pi_r)$, such that
\begin{itemize}
\item $\pi_1 \geq\ldots  \geq \pi_r,$
\item $\pi_1+\ldots +\pi_r=m.$
\end{itemize}

Sometimes it is useful to consider a partition of $m$ with exactly $m$ entries, so we complete the list with zeros. The number of nonzero entries in the partition $\pi$ is called the {\em length of $\pi$}\/  and is represented by $l(\pi)$.

Given an $n$-tuple of real numbers $x=(x_1,\ldots ,x_n)$ 
and $\alpha\in \Gamma_{m,n}$, we define the $m$-tuple
$$x_\alpha := (x_{\alpha(1)},x_{\alpha(2)},\ldots, x_{\alpha(m)}).$$

It is known from representation theory that there is a canonical correspondence between the irreducible characters of $S_m$ and the partitions of $m$, it is usual to use the same notation to represent both of them.
 Recall that if $\chi=(1,\ldots ,1)$ then $V_{\chi}=\wedge^m V$ is the Grassmann space, and if $\chi=(m,0\ldots ,0)$, then $V_{\chi}=\vee^m V$.
 

For every partition  $\pi=(\pi_1, \pi_2,\ldots ,\pi_{l(\pi)})$ of $m$ we define $\omega(\pi)$ as 
$$\omega(\pi):=(\underbrace{1,\ldots ,1}_{\pi_1 \text{ times}},\underbrace{2,\ldots ,2}_{\pi_2 \text{ times}},\ldots ,\underbrace{l(\pi),\ldots ,l(\pi)}_{\pi_{l(\pi)} \text{ times}})\in G_{m,n} \subseteq \Gamma_{m,n}.$$

For each $\alpha \in \Gamma_{m,n}$ let $\im \alpha=\lbrace i_1,\ldots ,i_l \rbrace$, suppose that $|\alpha^{-1}(i_1)| \geq\ldots  \geq |\alpha^{-1}(i_l)|$. The partition of $m$
\begin{equation}
\label{mu_alpha}
\mu (\alpha):=(|\alpha^{-1}(i_1)| ,\ldots ,|\alpha^{-1}(i_l)| )
\end{equation}
  is called the {\em multiplicity partition}\/ of  $\alpha$.
 
\begin{rmk}
\label{mult_part}
The multiplicity partition of  $\omega(\pi)$ is equal to the partition $\pi$: $\mu(\omega(\pi))=\pi$.
\end{rmk}

We have that $\im \omega(\pi)=\lbrace 1,2, \ldots, l(\pi) \rbrace$ and that $|\alpha^{-1}(i)|= \pi_i$, for every $i=1,2, \ldots l(\pi)$. So 
$$ \mu(\omega(\pi))=(|\alpha^{-1}(1)|,|\alpha^{-1}(2)|,\ldots,|\alpha^{-1}(l(\pi))|)=(\pi_1,\pi_2, \ldots, \pi_{l(\pi)})= \pi.$$

We recall a well known order defined on the set of partitions of $m$. A partition $\mu$ precedes $\lambda$, written $\mu \preceq \lambda$, if for all $ 1 \leq s \leq m$, $$\displaystyle \sum_{j=1}^s \mu_j \leq \displaystyle \sum_{j=1}^s \lambda_j.$$
 
We will also need the following classical result, which is \cite[Th.\ 6.3.7]{Me}.

\begin{theorem}
Let $\chi$ be a partition of  $m$ and  $\alpha \in \Gamma_{m,n}$. Let $\Omega_{\chi}$ and  $\mu(\alpha)$ be as defined in (\ref{omega_chi}) and (\ref{mu_alpha}).
Then $\alpha \in \Omega_{\chi}$ if and only if $\chi$ majorizes $\mu(\alpha)$.
\label{Merris} 
\end{theorem}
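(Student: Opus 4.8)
The plan is to translate the non-vanishing of the character sum $\sum_{\sigma \in G_\alpha}\chi(\sigma)$ into a statement about multiplicities of irreducible representations, and then to identify that multiplicity with a Kostka number whose positivity is controlled exactly by the dominance order.

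First I would identify the stabilizer $G_\alpha$ explicitly. Since $\alpha\sigma=\alpha$ holds precisely when $\sigma$ permutes each fibre $\alpha^{-1}(i)$ within itself, $G_\alpha$ is the internal direct product $\prod_{i\in\im\alpha}\mathrm{Sym}(\alpha^{-1}(i))$, a Young subgroup of $S_m$ isomorphic to $S_{\mu_1}\times\cdots\times S_{\mu_l}$ with $\mu=\mu(\alpha)$. In particular the isomorphism type of $G_\alpha$, and hence the value of any class-function sum over it, depends only on the multiplicity partition $\mu(\alpha)$.

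Next I would recognize the normalized sum $\tfrac{1}{|G_\alpha|}\sum_{\sigma\in G_\alpha}\chi(\sigma)$ as the inner product $\langle\mathrm{Res}^{S_m}_{G_\alpha}\chi,\mathbf{1}\rangle_{G_\alpha}$, that is, the multiplicity of the trivial character in the restriction of $\chi$ to $G_\alpha$. Since $|G_\alpha|>0$, the condition $\alpha\in\Omega_\chi$ is equivalent to this multiplicity being nonzero. By Frobenius reciprocity this multiplicity equals $\langle\chi,\mathrm{Ind}_{G_\alpha}^{S_m}\mathbf{1}\rangle_{S_m}$, the multiplicity of $\chi$ in the permutation character induced from the trivial character of the Young subgroup $S_{\mu(\alpha)}$. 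By Young's rule this multiplicity is exactly the Kostka number $K_{\chi,\mu(\alpha)}$, the number of semistandard Young tableaux of shape $\chi$ and content $\mu(\alpha)$.

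The crux is therefore the combinatorial equivalence $K_{\lambda\mu}>0 \iff \mu\preceq\lambda$. One direction is immediate: in a semistandard tableau of shape $\lambda$ the strict increase down columns forces every entry not exceeding $s$ to lie in the first $s$ rows, so $\sum_{j\leq s}\mu_j\leq\sum_{j\leq s}\lambda_j$ for all $s$, giving $\mu\preceq\lambda$. The harder direction, that dominance $\mu\preceq\lambda$ guarantees at least one such tableau, I would handle by an explicit greedy construction (or induction on the number of parts), placing the $\mu_1$ copies of $1$ along the first row and distributing the remaining entries row by row while respecting strict column increase. This is the main obstacle, since one must check the filling never violates column-strictness, and it is precisely where the dominance inequalities get consumed. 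Combining the three steps yields $\alpha\in\Omega_\chi \iff K_{\chi,\mu(\alpha)}\neq 0 \iff \mu(\alpha)\preceq\chi$, which is the assertion that $\chi$ majorizes $\mu(\alpha)$.
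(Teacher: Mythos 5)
The paper itself offers no proof of this statement: it is quoted verbatim as a classical result, \cite[Th.\ 6.3.7]{Me}. So your attempt is to be measured against the standard proof in the literature, and in outline your route is exactly that proof. Your first two steps are complete and correct: $G_\alpha$ is indeed the Young subgroup $\prod_{i\in\im\alpha}\mathrm{Sym}(\alpha^{-1}(i))\cong S_{\mu_1}\times\cdots\times S_{\mu_l}$; the sum $\sum_{\sigma\in G_\alpha}\chi(\sigma)$ is $|G_\alpha|$ times the multiplicity $\langle\mathrm{Res}^{S_m}_{G_\alpha}\chi,\mathbf{1}\rangle$, hence a nonnegative integer multiple of $|G_\alpha|$, so ``nonzero'' and ``positive'' coincide; Frobenius reciprocity converts this into $\langle\chi,\mathrm{Ind}_{G_\alpha}^{S_m}\mathbf{1}\rangle$, and Young's rule identifies that with the Kostka number $K_{\chi,\mu(\alpha)}$. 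The easy direction of the Kostka equivalence (column-strictness forces entries $\leq s$ into the first $s$ rows, whence $\mu\preceq\lambda$) is also correctly argued.

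The one genuine gap is the step you yourself flag as the main obstacle, and the specific construction you sketch does not work as stated. Take $\lambda=(4,1,1)$ and $\mu=(2,2,2)$: the partial sums are $2,4,6$ against $4,5,6$, so $\mu\preceq\lambda$; but placing the $\mu_1$ ones along the first row and distributing the remaining entries of the weakly increasing word $1,1,2,2,3,3$ row by row yields rows $(1,1,2,2)$, $(3)$, $(3)$, whose first column $(1,3,3)$ is not strictly increasing --- even though a valid tableau exists, namely rows $(1,1,2,3)$, $(2)$, $(3)$. So a greedy filling must be allowed to backtrack or be organized differently, and one must prove it never gets stuck; that is precisely the content of the lemma, not a routine verification. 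The standard repair is an induction on $l(\mu)$: using the dominance inequalities, one shows $\lambda$ admits a horizontal strip of $\mu_{l}$ boxes (no two in the same column, removable from the ends of the rows) whose removal leaves a partition $\lambda'$ with $(\mu_1,\ldots,\mu_{l-1})\preceq\lambda'$; one fills the strip with the largest letter $l$ and applies the induction hypothesis. With that lemma supplied (or simply cited, as the positivity criterion $K_{\lambda\mu}>0\iff\mu\preceq\lambda$ is itself classical), your argument is a complete and correct proof of the theorem.
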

 
\section{Formulas on $k$-th derivatives}

We now present a formula for higher order derivatives that generalizes formulas in \cite{BF} and \cite{BGJ}. It is known that, given $X^1,\ldots,X^m\in {\cal L}(V)$, the space $V_\chi$ is invariant for the map defined as
$$X^1\tilde \otimes X^2\tilde \otimes \ldots \tilde \otimes X^m:=\frac{1}{m!} \sum_{\sigma \in S^m} X^{\sigma(1)}\otimes \cdots \otimes X^{\sigma(m)}.$$
See for instance \cite[p.\ 184]{Me}. We will denote by the restriction of this map to $V_\chi$ by
$X^1 * \cdots \ast X^m$ and call it the {\em symmetrized $\chi$-symmetric tensor product} of  the operators $X^1,\ldots,X^m$. We remark that this notation does not convey the fact that the product depends on the character $\chi$. In \cite{BGJ} the following formula is deduced:
\begin{equation}
D^k(\otimes^mT)(X^1,\ldots,X^k) = \frac{m!}{(m-k)!} \underbrace{T\tilde\otimes \cdots \tilde\otimes T}_{m-k \text{ copies}} \tilde\otimes X^1 \tilde\otimes \cdots \tilde\otimes X^k.\label{deriv_tensor_power}
\end{equation}
If $k>m$ all derivatives are zero. From this we can deduce a formula for $D^kK_\chi(T)(X^1,\ldots,X^k)$, using the same techniques.

If $L$ is a linear map, it is known that the derivative of $L$ is $L$ at each point. Then, applying the chain rule, we have that $$D(L\circ f)(a)(x)=L \circ D(f(a)(x)).$$ If the map $f$ is $k$ times differentiable then 
\begin{equation}
D^k(L\circ f)(a)(x^1,\ldots ,x^k)=L\circ D^kf(a)(x^1,\ldots ,x^k).
\label{chain_rule}
\end{equation}

\begin{theorem} Using the notation we have established, we have \label{deriv_k_chi}
$$D^kK_\chi(T)(X^1,\ldots,X^k) = \frac{m!}{(m-k)!} T * \cdots * T * X^1 *\cdots * X^k.$$
If $m=k$ this formula does not depend on $T$, and if $k>m$ all derivatives are zero. 
\end{theorem}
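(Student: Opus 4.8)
The plan is to derive the $\chi$-symmetric formula from the already-established tensor-power formula \eqref{deriv_tensor_power} by viewing $K_\chi(T)$ as the composition of $\otimes^m T$ with the restriction map to $V_\chi$, and then applying the chain rule \eqref{chain_rule}. The key observation is that $K_\chi$ is precisely the restriction of $\otimes^m$ to the invariant subspace $V_\chi$, so if we let $L$ denote the (linear) inclusion-and-restriction operation that sends an operator on $\otimes^m V$ preserving $V_\chi$ to its restriction on $V_\chi$, then $K_\chi(T) = L \circ (\otimes^m T)$ in the appropriate sense. Since differentiation commutes with the linear map $L$ by \eqref{chain_rule}, I would obtain
\begin{equation}
D^k K_\chi(T)(X^1,\ldots,X^k) = L\bigl(D^k(\otimes^m T)(X^1,\ldots,X^k)\bigr),
\label{pf-chain}
\end{equation}
and then feed in the explicit right-hand side of \eqref{deriv_tensor_power}.

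**Next I would** carry out this substitution. Applying $L$ to \eqref{deriv_tensor_power} gives
\begin{equation}
D^k K_\chi(T)(X^1,\ldots,X^k) = \frac{m!}{(m-k)!}\, L\bigl(\underbrace{T\tilde\otimes\cdots\tilde\otimes T}_{m-k}\,\tilde\otimes X^1\tilde\otimes\cdots\tilde\otimes X^k\bigr).
\label{pf-restrict}
\end{equation}
Here the crucial identification is that the restriction to $V_\chi$ of the symmetrized tensor product $\tilde\otimes$ is exactly the symmetrized $\chi$-symmetric product $*$ introduced just before the theorem statement, so that $L(Y^1\tilde\otimes\cdots\tilde\otimes Y^m) = Y^1 * \cdots * Y^m$. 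This is legitimate because the excerpt already records that $V_\chi$ is invariant for the symmetrized tensor map and defines $*$ as precisely its restriction. Making this replacement in \eqref{pf-restrict} yields the claimed formula. The two boundary cases are then immediate: when $k=m$ there are no copies of $T$ remaining in the product, so the expression involves only the $X^i$ and is independent of $T$; and when $k>m$ the $(m-k)!$ in the denominator reflects that the $k$-th derivative of a degree-$m$ polynomial map vanishes, which is already noted for \eqref{deriv_tensor_power} and is preserved by the linear map $L$.

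**The main obstacle** I anticipate is making the chain-rule application fully rigorous, since $L$ is not literally a map on a single fixed vector space: $K_\chi(T)$ lives in $\mathcal{L}(V_\chi)$ while $\otimes^m T$ lives in $\mathcal{L}(\otimes^m V)$, and the passage between them is restriction to an invariant subspace rather than post-composition with a fixed linear operator. To handle this cleanly I would realize $L$ as the genuinely linear map $A \mapsto P_\chi A|_{V_\chi}$ (or equivalently $A \mapsto P_\chi A P_\chi$ followed by restriction), where $P_\chi$ is the orthogonal projection onto $V_\chi$; on the subspace of operators preserving $V_\chi$ this agrees with honest restriction, and it is manifestly linear, so \eqref{chain_rule} applies verbatim. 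I would need to check that the derivative expression $\frac{m!}{(m-k)!}\,T\tilde\otimes\cdots\tilde\otimes X^k$ indeed preserves $V_\chi$ before restricting—but this is guaranteed by the invariance statement cited from \cite[p.\ 184]{Me}. Once that bookkeeping is settled, the result follows formally with no further computation, mirroring exactly the derivation of \eqref{deriv_tensor_power} in \cite{BGJ}.
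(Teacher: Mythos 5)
Your proposal is correct and follows essentially the same route as the paper: the paper also realizes the restriction as the linear compression $A\mapsto Q^*AQ$ (with $Q$ the inclusion of $V_\chi$ and $Q^*$ the orthogonal projection, i.e.\ your $P_\chi$), applies the chain rule \eqref{chain_rule} to pull $D^k$ past this linear map, substitutes \eqref{deriv_tensor_power}, and identifies $Q^*(Y^1\tilde\otimes\cdots\tilde\otimes Y^m)Q$ with $Y^1*\cdots*Y^m$ by definition. The bookkeeping point you flag about restriction versus compression is exactly how the paper handles it, so no changes are needed.
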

\begin{proof}
Let $Q$ be the inclusion map defined as $Q: V_\chi \longrightarrow \otimes^m V$, so its adjoint operator $Q^*$ is the projection of $\otimes^m V$ onto $V_\chi$. We have
$$T_1 * \cdots * T_m = Q^* (T_1 \tilde\otimes \cdots \tilde\otimes T_m) Q.$$
Both maps $L\mapsto Q^*T$ and $T\mapsto LQ$ are linear, so we can apply formulas \eqref{deriv_tensor_power} and \eqref{chain_rule} and get
\begin{eqnarray*}
D^kK_\chi(T)(X^1,\ldots,X^n) & = & D^k(Q^*(\otimes^m T)Q)(X^1,\ldots,X^k)\\
& = &  Q^* D^k(\otimes^m T)(X^1,\ldots,X^k) Q\\
& = & \frac{m!}{(m-k)!} Q^* (\underbrace{T\tilde\otimes \cdots \tilde\otimes T}_{m-k\text{ times}} \tilde\otimes X^1 \tilde\otimes\cdots \tilde\otimes X^k) Q\\
& = & \frac{m!}{(m-k)!} T * \cdots * T * X^1 *\cdots * X^k. 
\end{eqnarray*}
This concludes the proof.\end{proof}
 
\section{Norm of the $k$-the derivative of $K_\chi(T)$}

We recall that the norm of a multilinear operator $\Phi:(\mathcal{L}(V))^k\longrightarrow\mathcal{L}(U) $  is given by $$ \Vert \Phi \Vert = \sup_ { \Vert X^1 \Vert =\ldots =\Vert X^k \Vert =1 } \Vert \Phi (X^1,\ldots ,X^k) \Vert.$$
The main result of this section is the following theorem. 
 
 \begin{theorem}\label{main_thm}
 Let $V$ be an $n$-dimensional Hilbert space. Let $m$ and $k$ be positive integers such that $1 \leq k \leq m \leq n$, and let $\chi$ be a partition of $m$. Let $T \to K_{\chi}(T)$ be the map that associates to each element of $\mathcal{L}(V)$ the induced operator $K_{\chi}(T)$ on the symmetry class $V_{\chi}.$
 Then the norm of the derivative of order $k$ of this map is given by the formula
 \begin{equation}
 \Vert D^k K_{\chi}(T) \Vert = k!\,  p_{m-k} (\nu_{\omega(\chi)})
 \end{equation}
 where $p_{m-k}$ is the symmetric polynomial of degree $m-k$ in $m$ variables. 
 \end{theorem}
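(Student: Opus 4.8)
The plan is to reduce the statement, via Theorem~\ref{deriv_k_chi}, to estimating the operator norm of a single symmetrized product, then to obtain the value from an explicit lower bound together with a matching (and harder) upper bound.

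First I would rewrite $\|D^kK_\chi(T)\|=\frac{m!}{(m-k)!}\sup_{\|X^1\|=\cdots=\|X^k\|=1}\|T*\cdots*T*X^1*\cdots*X^k\|$, with $m-k$ copies of $T$, using Theorem~\ref{deriv_k_chi}. Next I would reduce to diagonal $T$. Writing a singular value decomposition $T=U\Sigma W^*$ with $\Sigma=\mathrm{diag}(\nu_1,\ldots,\nu_n)$, $\nu_1\geqs\cdots\geqs\nu_n\geqs0$, and using multiplicativity of $K_\chi$, the identity $K_\chi(T+\sum_jt_jX^j)=K_\chi(U)\,K_\chi(\Sigma+\sum_jt_j\,U^*X^jW)\,K_\chi(W^*)$ differentiates to $D^kK_\chi(T)(X^1,\ldots,X^k)=K_\chi(U)\,D^kK_\chi(\Sigma)(U^*X^1W,\ldots,U^*X^kW)\,K_\chi(W)^*$. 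Since $K_\chi(U)$ and $K_\chi(W)$ are unitary on $V_\chi$ and $X\mapsto U^*XW$ is an isometry of $\mathcal{L}(V)$, this gives $\|D^kK_\chi(T)\|=\|D^kK_\chi(\Sigma)\|$, so I may assume $T$ is diagonal.

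For the lower bound I would test with $X^1=\cdots=X^k=I$, the identity operator (of norm $1$), and use the distinguished index $\omega(\chi)$. By Remark~\ref{mult_part} we have $\mu(\omega(\chi))=\chi$, and since $\chi$ majorizes itself, Theorem~\ref{Merris} gives $\omega(\chi)\in\Omega_\chi$, so $e^*_{\omega(\chi)}\neq0$. With $T$ diagonal, every tensor $e^\otimes_\gamma$ is an eigenvector of $T\tilde\otimes\cdots\tilde\otimes T\tilde\otimes I\tilde\otimes\cdots\tilde\otimes I$, with eigenvalue $\frac1{m!}\sum_{\tau\in S_m}\prod_{i:\tau(i)\leqs m-k}\nu_{\gamma(i)}=\frac1{\binom{m}{k}}p_{m-k}(\nu_\gamma)$, where $p_{m-k}$ is the elementary symmetric polynomial; because $p_{m-k}$ is symmetric, this value is the same for every rearrangement $\gamma=\omega(\chi)\rho^{-1}$ of $\omega(\chi)$. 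Expanding $e^*_{\omega(\chi)}=K_\chi e^\otimes_{\omega(\chi)}$ as a combination of such tensors, and recalling that the product preserves $V_\chi$, shows that $e^*_{\omega(\chi)}$ is an eigenvector of $T*\cdots*T*I*\cdots*I$ with eigenvalue $\frac1{\binom{m}{k}}p_{m-k}(\nu_{\omega(\chi)})$. Multiplying by $\frac{m!}{(m-k)!}$ yields $\|D^kK_\chi(T)\|\geqs k!\,p_{m-k}(\nu_{\omega(\chi)})$.

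The remaining task, and the main obstacle, is the matching bound $\|T*\cdots*T*X^1*\cdots*X^k\|\leqs\frac1{\binom{m}{k}}p_{m-k}(\nu_{\omega(\chi)})$ for all unit-norm operators. Since the map $(X^1,\ldots,X^k)\mapsto\|T*\cdots*T*X^1*\cdots*X^k\|$ is convex in each argument, I would first restrict the supremum to the extreme points of the unit ball, i.e.\ to unitary $X^j$. Then I would invoke the result from \cite{CF} expressing each entry $\langle(T*\cdots*T*X^1*\cdots*X^k)e^*_\beta,e^*_\alpha\rangle$ ($\alpha,\beta\in\widehat{\Delta}$) as a normalized mixed generalized matrix function $\miximm$ of the submatrices of $T$ and of the $X^j$ selected by $\alpha,\beta$. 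The diagonality of $T$ forces the surviving monomials to be products of the singular values $\nu_{\omega(\chi)(i)}$, while Theorem~\ref{Merris} restricts the admissible tuples through $\mu(\alpha)\preceq\chi$; using $|(X^j)_{pq}|\leqs1$ together with this combinatorial control I would bound the operator norm of the product and check that it cannot exceed the eigenvalue found above. The delicate point is exactly this estimate: for a general irreducible $\chi$ the basis $\{e^*_\alpha\}$ is orthogonal only when $\chi$ is linear, and mixed immanants lack the clean multiplicative behaviour of permanents and determinants, so one must control simultaneously the Gram matrix of the basis and the immanant entries, and verify that equality is forced precisely at the configuration $X^1=\cdots=X^k=I$, $\alpha=\beta=\omega(\chi)$ identified in the lower bound.
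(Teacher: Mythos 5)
Your reduction to diagonal $T$ via the singular value decomposition is sound (it is the same move as the paper's polar-decomposition step), and your lower bound is correct and essentially identical to the paper's computation: $e^*_{\omega(\chi)}\neq 0$ by Theorem~\ref{Merris} and Remark~\ref{mult_part}, it is an eigenvector of $T*\cdots*T*I*\cdots*I$ with eigenvalue $\binom{m}{k}^{-1}p_{m-k}(\nu_{\omega(\chi)})$, and the factor $\frac{m!}{(m-k)!}$ turns this into $k!\,p_{m-k}(\nu_{\omega(\chi)})$. But the upper bound --- which is where the theorem lives --- is not proved in your proposal, and you say so yourself (``the delicate point is exactly this estimate''). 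The route you sketch cannot be completed as stated: bounding entries $|(X^j)_{pq}|\leqs 1$ of the mixed-immanant matrix representation gives no control of the operator norm, because the basis $\{e^*_\alpha:\alpha\in\widehat\Delta\}$ is not orthogonal for general irreducible $\chi$ (a matrix representation in a non-orthonormal basis does not bound the norm entrywise), and even in an orthonormal basis entrywise estimates introduce dimension factors and ignore the signs of the character values $\chi(\sigma)$, so they cannot recover the sharp constant $k!\,p_{m-k}(\nu_{\omega(\chi)})$.

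The missing idea, which is how the paper closes the argument, is positivity plus the multilinear Russo--Dye theorem of \cite{BGJ}. After replacing $T$ by a positive semidefinite $P$ (your $\Sigma$ works equally well), the map $(X^1,\ldots,X^k)\mapsto D^kK_\chi(P)(X^1,\ldots,X^k)$ is a \emph{positive} multilinear map, since by Theorem~\ref{deriv_k_chi} its value at positive semidefinite arguments is the restriction of a positive semidefinite operator to the invariant subspace $V_\chi$. The multilinear Russo--Dye theorem then gives $\Vert D^kK_\chi(P)\Vert=\Vert D^kK_\chi(P)(I,\ldots,I)\Vert$ outright, with no need for the extreme-point reduction or any entrywise estimate. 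This collapses the upper bound to a finite maximization over the eigenvalues $\lambda(\alpha)=k!\,p_{m-k}(\nu_\alpha)$, $\alpha\in\widehat\Delta$, which you already know how to compute; the maximization is settled exactly by the ingredients you mention only in passing: $\lambda$ is constant on $S_m$-orbits, so one may take $\alpha\in\overline\Delta\subseteq G_{m,n}$, and Theorem~\ref{Merris} forces $\chi=\mu(\omega(\chi))\succeq\mu(\alpha)$, hence $\omega(\chi)$ precedes $\alpha$ lexicographically and $\lambda(\omega(\chi))\geq\lambda(\alpha)$ by Lemma~\ref{prec_lambda}. Without Russo--Dye (or some substitute argument pinning the supremum at $(I,\ldots,I)$), your proof establishes only the inequality $\Vert D^kK_\chi(T)\Vert\geqs k!\,p_{m-k}(\nu_{\omega(\chi)})$.
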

  
The proof of our main result is inspired in the techniques used in \cite{BDS}. We will now highlight the most important features of the proof of our main theorem.

First we will use the polar decomposition of the operator $T$, in the following form: $P=TW$, with $P$ positive semidefinite and $W$ unitary. We will see that $\Vert D^k K_{\chi}(T)\Vert = \Vert D^k K_{\chi}(P)\Vert$. This allows us to replace $T$ by $P$.
After that we observe that the multilinear map $D^k K_{\chi}(P)$ is positive between the two algebras in question, so it is possible to use a multilinear version of the famous Russo-Dye theorem that states that the norm for a positive multilinear map is attained in $(I,I,\ldots ,I)$, where $I$ is the identity operator. This result considerably simplifies the calculations needed to obtain the expression stated in our theorem.

The second part of our proof consists in finding the largest singular value of $D^k K_{\chi}(P)(I,I,\ldots,I)$, which coincides with the value of the norm of $D^kK_\chi(T)$. \medskip


First we will need some properties of the operator $K_\chi$. They all follow from the definitions. 

\pagebreak

\begin{prop}
Let $\chi$ be an irreducible character of $S_m$ and suppose that $S$ and $T$ are in ${\cal L}(V)$ and $v_1, \ldots v_m \in V$. Then
\begin{enumerate}
\item $K_\chi(ST)=K_\chi(S)K_\chi(T)$,
\item $K_\chi(T)(v_1 \ast \cdots \ast v_m)=T(v_1)\ast \cdots \ast T(v_m)$,
\item $K_\chi(T)^\ast=K_\chi(T^\ast)$, where $T^\ast$ is the adjoint operator of $T$,
\item $K_\chi(T)$ is invertible for all invertible $T$ and $K_\chi(T)^{-1}=K_\chi(T^{-1}).$
\end{enumerate}
\end{prop}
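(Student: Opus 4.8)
The plan is to reduce everything to one structural observation about how $K_\chi$ interacts with $\otimes^m T$, together with the standard functoriality of tensor powers. The key fact I would establish first is that $\otimes^m T$ commutes with every permutation operator $P(\sigma)$, and hence with $K_\chi$. Indeed, applying both composites to a decomposable tensor $v_1 \otimes \cdots \otimes v_m$ gives $Tv_{\sigma^{-1}(1)} \otimes \cdots \otimes Tv_{\sigma^{-1}(m)}$ in either order, so $(\otimes^m T)P(\sigma) = P(\sigma)(\otimes^m T)$ on a spanning set and therefore everywhere; summing against the weights $\chi(\sigma)$ yields $(\otimes^m T)K_\chi = K_\chi(\otimes^m T)$. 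I would also record two elementary facts that follow directly from the definition of $\otimes^m T$ on decomposable tensors: the functoriality $\otimes^m(ST) = (\otimes^m S)(\otimes^m T)$, and, for the induced inner product on $\otimes^m V$, the adjoint formula $(\otimes^m T)^\ast = \otimes^m(T^\ast)$. The latter is checked by evaluating $\langle (\otimes^m T)(v_1\otimes\cdots\otimes v_m), w_1\otimes\cdots\otimes w_m\rangle = \prod_i \langle Tv_i, w_i\rangle = \prod_i\langle v_i, T^\ast w_i\rangle$.

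With these in hand, property 2 is immediate: since $v_1\ast\cdots\ast v_m = K_\chi(v_1\otimes\cdots\otimes v_m)$ lies in $V_\chi$ and $K_\chi(T)$ is the restriction of $\otimes^m T$ to $V_\chi$, the commutation relation gives $K_\chi(T)(v_1\ast\cdots\ast v_m) = (\otimes^m T)K_\chi(v_1\otimes\cdots\otimes v_m) = K_\chi(\otimes^m T)(v_1\otimes\cdots\otimes v_m) = K_\chi(Tv_1\otimes\cdots\otimes Tv_m) = Tv_1\ast\cdots\ast Tv_m$. For property 1 I would argue that the decomposable symmetrised tensors span $V_\chi$, so it suffices to compare both operators on them; repeated use of property 2 gives $K_\chi(ST)(v_1\ast\cdots\ast v_m) = STv_1\ast\cdots\ast STv_m = K_\chi(S)K_\chi(T)(v_1\ast\cdots\ast v_m)$. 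Alternatively one may simply restrict the operator identity $\otimes^m(ST)=(\otimes^m S)(\otimes^m T)$ to the invariant subspace $V_\chi$.

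For property 3 the one point requiring care is that the adjoint is taken inside $V_\chi$ with the inherited inner product, so I must know that $V_\chi$ is invariant not only under $\otimes^m T$ but also under its adjoint $\otimes^m T^\ast$. This again follows from commutation: $K_\chi$ commutes with $\otimes^m T^\ast$, so $V_\chi = \im K_\chi$ is $\otimes^m T^\ast$-invariant, i.e.\ $V_\chi$ reduces $\otimes^m T$. Then for $u, w \in V_\chi$ I compute $\langle K_\chi(T)u, w\rangle = \langle (\otimes^m T)u, w\rangle = \langle u, (\otimes^m T^\ast)w\rangle = \langle u, K_\chi(T^\ast)w\rangle$, where the last step uses that $(\otimes^m T^\ast)w = K_\chi(T^\ast)w$ because $w \in V_\chi$. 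This identifies $K_\chi(T)^\ast = K_\chi(T^\ast)$.

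Finally, property 4 is a formal consequence of property 1 once I note that $K_\chi(I)$ is the identity on $V_\chi$, being the restriction of $\otimes^m I = I$. For invertible $T$, property 1 gives $K_\chi(T)K_\chi(T^{-1}) = K_\chi(TT^{-1}) = K_\chi(I) = I_{V_\chi}$ and likewise in the reverse order, whence $K_\chi(T)^{-1} = K_\chi(T^{-1})$. None of the steps is genuinely hard — as the statement says, everything flows from the definitions — and the only place I expect to have to be deliberate is the reducing-subspace point in property 3, since the equality of the restricted adjoint with the restriction of the adjoint is exactly where the invariance of $V_\chi$ under $\otimes^m T^\ast$ is used.
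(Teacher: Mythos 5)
Your proof is correct; the paper in fact gives no argument for this proposition, simply asserting that all four properties ``follow from the definitions,'' and your writeup is exactly the standard verification that assertion alludes to. The one step where genuine care is needed --- that in part 3 the adjoint is taken inside $V_\chi$ with the inherited inner product, so one must know $V_\chi$ is invariant under $\otimes^m T^\ast$ as well --- you identify and handle correctly via the commutation of $K_\chi$ with every operator of the form $\otimes^m S$.
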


By the polar decomposition, we know that for every $T \in {\cal L}(V)$ there are a positive semidefinite operator $P$ and an unitary operator $W$ such that $P=TW$. Moreover, the eigenvalues of $P$ are the singular values of $T$. 

\begin{prop}
With the above notation, we have 
$$\Vert D^k K_{\chi}(T)\Vert = \Vert D^k K_{\chi}(P)\Vert.$$
\end{prop}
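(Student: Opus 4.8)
The plan is to exploit the multiplicativity of $K_\chi$ together with the polar decomposition $P=TW$, thereby reducing the equality of norms to the fact that multiplication by a unitary is an isometry. Since $T=PW^*$ and $W$ is unitary, the map connecting $T$ and $P$ is right multiplication by $W$; accordingly I would differentiate, in the variable $S$, the algebraic identity $K_\chi(SW)=K_\chi(S)K_\chi(W)$ (the multiplicativity property in the proposition above).

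First I would read $S\mapsto K_\chi(SW)$ as a composition with a linear map in two ways. Writing $R_W(S)=SW$, which is linear with $DR_W(S)(X)=XW$, the higher-order chain rule for a linear inner map gives
$$D^k(K_\chi\circ R_W)(S)(X^1,\ldots,X^k)=D^kK_\chi(SW)(X^1W,\ldots,X^kW).$$
On the other hand, $K_\chi(SW)=K_\chi(S)K_\chi(W)=\rho(K_\chi(S))$, where $\rho(A)=A\,K_\chi(W)$ is linear on $\mathcal{L}(V_\chi)$; here \eqref{chain_rule} applies directly and yields $D^kK_\chi(S)(X^1,\ldots,X^k)\,K_\chi(W)$. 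Equating the two expressions and specializing $S=T$, so that $SW=TW=P$, produces the intertwining identity
$$D^kK_\chi(P)(X^1W,\ldots,X^kW)=D^kK_\chi(T)(X^1,\ldots,X^k)\,K_\chi(W).$$

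The key step is then to observe that $K_\chi(W)$ is unitary: from $K_\chi(T)^\ast=K_\chi(T^\ast)$ and $K_\chi(T)^{-1}=K_\chi(T^{-1})$ in the proposition above we get $K_\chi(W)^\ast=K_\chi(W^\ast)=K_\chi(W^{-1})=K_\chi(W)^{-1}$. Consequently right multiplication by the unitary $K_\chi(W)$ preserves the (operator) norm on $\mathcal{L}(V_\chi)$, so both sides of the intertwining identity have equal norm for every $(X^1,\ldots,X^k)$. Finally I would pass to the supremum: since $W$ is unitary, $\Vert X^iW\Vert=\Vert X^i\Vert$, so $Y^i:=X^iW$ runs bijectively over the unit sphere of $\mathcal{L}(V)$ as $X^i$ does. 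Taking $\sup$ over $\Vert X^i\Vert=1$ on both sides of the identity then yields $\Vert D^kK_\chi(T)\Vert=\Vert D^kK_\chi(P)\Vert$.

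I expect the crux to be the derivation of the intertwining identity: specifically, invoking the higher-order chain rule for the inner linear map $R_W$ (the outer-map version being exactly \eqref{chain_rule}) and keeping the two compositions straight. The remaining ingredients, namely the unitarity of $K_\chi(W)$ and the norm-invariance under the substitution $X^i\mapsto X^iW$, are routine once this identity is available.
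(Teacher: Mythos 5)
Your proposal is correct and takes essentially the same route as the paper: polar decomposition $P=TW$, unitarity of $K_\chi(W)$ via $K_\chi(W)^\ast=K_\chi(W^{-1})$, the intertwining identity $D^kK_\chi(T)(X^1,\ldots,X^k)\,K_\chi(W)=D^kK_\chi(P)(X^1W,\ldots,X^kW)$, and the supremum argument using that $X\mapsto XW$ is a bijection of the unit sphere. The only cosmetic difference is that the paper derives the intertwining identity by pulling $K_\chi(W)$ inside the defining partial derivatives and applying multiplicativity of $K_\chi$ directly, which is exactly your two-readings-of-the-composition chain-rule argument written out concretely.
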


\begin{proof}
Let $P=TW$, with $W$ unitary. Then $K_{\chi}(W)$ is also unitary , because
$$\left[ K_{\chi}(W)\right] ^{-1}=K_{\chi}(W^{-1})=K_{\chi}(W^*)=\left[ K_{\chi}(W)\right] ^*.$$
So, we have
\begin{eqnarray*}
&&\Vert D^k K_{\chi}(T)(X^1,\ldots ,X^k) \Vert = \\
&& \qquad = \Vert D^k K_{\chi}(T)(X^1,\ldots ,X^k)K_{\chi}(W) \Vert \\
&& \qquad = \Vert \left(\dfrac{\partial^m}{\partial t_1\ldots \partial t_k}\Big|_{t_1=\ldots =t_k=0} K_{\chi}(T+t_1X^1+\ldots +t_kX^k)\right)K_{\chi}(W) \Vert \\
&& \qquad = \Vert \dfrac{\partial^m}{\partial t_1\ldots \partial t_k}\Big|_{t_1=\ldots =t_k=0} K_{\chi}(T+t_1X^1+\ldots +t_kX^k)K_{\chi}(W)\Vert \\
&& \qquad = \Vert \dfrac{\partial^m}{\partial t_1\ldots \partial t_k}\Big|_{t_1=\ldots =t_k=0} K_{\chi}(P+t_1X^1W+\ldots +t_kX^kW)\Vert \\
&& \qquad = \Vert D^k K_{\chi}(P)(X^1W,\ldots ,X^kW) \Vert
\end{eqnarray*} 
We have $\Vert X^iW \Vert=\Vert X^i \Vert$ and moreover $\left\lbrace XW: \Vert X \Vert =1 \right\rbrace$ is the set of all operators of norm 1, so
\begin{eqnarray*}
\Vert D^k K_{\chi}(T) \Vert &=& \sup_{\Vert X^1\Vert =\ldots =\Vert X^k\Vert=1}\Vert D^k K_{\chi}(T) (X^1,\ldots ,X^k)\Vert \\
&=& \sup_{\Vert X^1\Vert =\ldots =\Vert X^k\Vert=1}\Vert D^k K_{\chi}(P) (X^1W,\ldots ,X^kW)\Vert \\
&=& \Vert D^k K_{\chi}(P) \Vert.
\end{eqnarray*}
This concludes the proof.
\end{proof}

Now we need to estimate the norm of the operator $D^kK_\chi(P)$. For this, we use a result from \cite{BGJ}, a multilinear version of the Russo-Dye theorem, which we quote here. A multilinear operator $\Phi$ is said to be {\em positive}\/ if $\Phi(X^1,\ldots ,X^k)$ is a positive semidefinite operator whenever $X^1,\ldots ,X^k$ are so.

\begin{theorem}[Russo-Dye multilinear version]
Let $\Phi:{\cal L}(V)^k\longrightarrow {\cal L}(U)$ be a positive multilinear operator. Then $$\Vert \Phi \Vert = \Vert \Phi (I,I,\ldots ,I) \Vert.$$
\end{theorem}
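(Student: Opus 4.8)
The plan is to prove the two inequalities separately. One direction is immediate: since $\Vert I\Vert=1$, the tuple $(I,\ldots,I)$ is admissible in the supremum defining $\Vert\Phi\Vert$, so $\Vert\Phi\Vert\geqs\Vert\Phi(I,\ldots,I)\Vert$. The entire content lies in the reverse inequality, namely $\Vert\Phi(X^1,\ldots,X^k)\Vert\leqs\Vert\Phi(I,\ldots,I)\Vert$ for all contractions $X^1,\ldots,X^k$ (operators of norm at most $1$), and this is where the positivity of $\Phi$ has to be exploited.

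The first main step is to reduce to the case in which every argument is a unitary. Here I would invoke the classical (linear) Russo--Dye theorem in its geometric form: in the $C^{\ast}$-algebra $\mathcal{L}(V)$ the closed unit ball is compact and equals the closed convex hull of the unitary group, whose extreme points are exactly the unitaries. Fixing all arguments but the $i$-th, the slice $X\mapsto\Phi(X^1,\ldots,X^{i-1},X,X^{i+1},\ldots,X^k)$ is linear, so $X\mapsto\Vert\Phi(\ldots,X,\ldots)\Vert$ is a continuous convex function on the unit ball; by the Bauer maximum principle its supremum is attained at an extreme point, that is, at a unitary. Applying this successively in each of the $k$ slots yields
$$\Vert\Phi\Vert=\sup\{\Vert\Phi(U_1,\ldots,U_k)\Vert : U_1,\ldots,U_k\ \text{unitary}\}.$$

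The second, decisive step is to show $\Vert\Phi(U_1,\ldots,U_k)\Vert\leqs\Vert\Phi(I,\ldots,I)\Vert$ for unitary arguments. The mechanism I would try to exploit is that for a unitary $U$ and every real $\theta$ the self-adjoint operator $\tfrac12(e^{i\theta}U+e^{-i\theta}U^{\ast})$ is a contraction, hence is dominated by $I$; feeding such domination relations into $\Phi$, together with the facts that $\Phi$ carries positive tuples to positive operators and preserves adjoints, produces operator inequalities controlling $\Phi(U_1,\ldots,U_k)$ in terms of $\Phi(I,\ldots,I)$. Concretely, I would pass to the positive multilinear functional $\langle\Phi(\,\cdot\,)\xi,\xi\rangle$ for a unit vector $\xi$, which takes values in the commutative field $\C$ (where the sharp scalar Kadison--Schwarz inequality holds with no loss of constant), treat the unitary arguments there, and then recombine over $\xi$ to recover the operator norm.

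The step I expect to be the main obstacle is precisely this transfer of positivity through the multilinear structure at unitary arguments: peeling off a single variable by a positivity argument requires the remaining frozen variables to be positive, whereas the reduction above has left them unitary, and a naive recombination over $\xi$ only controls the numerical radius and so loses a factor. Overcoming this cleanly is the heart of the matter. I would resolve it by observing that the slice maps in question are compressions of tensor products of operators, exactly as in Theorem \ref{deriv_k_chi}, and are therefore completely positive; this makes the sharp (operator) Kadison--Schwarz inequality available and removes the spurious factor, or, equivalently, lets one linearize $\Phi$ to a single positive linear map on $\mathcal{L}(\otimes^k V)$ carrying $I\otimes\cdots\otimes I$ to $\Phi(I,\ldots,I)$ and then apply the one-variable Russo--Dye theorem directly. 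In either route the classical Russo--Dye theorem is the essential input, and the only genuinely new work is carrying it through all $k$ slots simultaneously.
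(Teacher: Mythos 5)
Your skeleton is right up to the crux: the easy inequality, and the reduction of the supremum to unitary tuples, are both fine (your extreme-point/Bauer argument is a legitimate variant of what the paper does in the opposite direction, where each contraction is written as the mean of two unitaries, $A_i=\frac{1}{2}(U_i+V_i)$, and multilinearity expands $\Phi(A_1,\ldots,A_k)$ into $2^k$ unitary terms). You also correctly diagnose the two dangers in the decisive step: frozen unitary slots break slice-wise positivity, and scalar recombination only controls the numerical radius, losing a factor. But your proposed repair does not work for the theorem as stated. The statement is about an \emph{arbitrary} positive multilinear $\Phi$, so you may not assume the slice maps are compressions of tensor products ``as in Theorem~\ref{deriv_k_chi}''; that structure belongs to the specific application $\Phi=D^kK_\chi(P)$ and would prove only that special case. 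The alternative route fails too: the canonical linearization $\tilde\Phi$ of $\Phi$ on $\mathcal{L}(\otimes^kV)$ is, by hypothesis, positive only on the cone generated by elementary tensors $X^1\otimes\cdots\otimes X^k$ with each $X^i\geq 0$ (the separable cone), and for $k\geq 2$ this cone is strictly smaller than the positive semidefinite cone of $\mathcal{L}(\otimes^kV)$ (entangled positive operators, e.g.\ maximally entangled projections, lie outside it). So $\tilde\Phi$ need not be a positive linear map, and the one-variable Russo--Dye theorem cannot be applied to it. Complete positivity of the slices is likewise not a consequence of multilinear positivity.

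The missing idea is elementary and avoids both obstacles at once. Spectrally decompose each unitary, $U_i=\sum_{j}\lambda_{i_j}P_{i_j}$ with $|\lambda_{i_j}|=1$; then every argument tuple appearing after expansion consists of \emph{projections}, which are positive, so positivity of $\Phi$ applies with nothing frozen at a unitary. Setting $X=\Phi(I,\ldots,I)$ and $Y=\Phi(U_1,\ldots,U_k)$, one gets
$$\begin{pmatrix} X & Y \\ Y^* & X \end{pmatrix}=\sum_{j_1,\ldots,j_k}\begin{pmatrix} 1 & \mu \\ \overline{\mu} & 1 \end{pmatrix}\otimes \Phi(P_{1_{j_1}},\ldots,P_{k_{j_k}}),\qquad \mu=\lambda_{1_{j_1}}\cdots\lambda_{k_{j_k}},$$
a sum of Kronecker products of positive semidefinite matrices, hence positive semidefinite; this yields $Y=X^{1/2}KX^{1/2}$ for a contraction $K$, so $\Vert Y\Vert\leq\Vert X\Vert$. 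This $2\times 2$ block trick is precisely the device that converts the numerical-radius-type control you obtained from $\langle\Phi(\cdot)\xi,\xi\rangle$ into a genuine norm bound with no lost factor. As written, your proof has a genuine gap at its central step; replacing the linearization/complete-positivity claims by the spectral decomposition plus the block-matrix lemma closes it.
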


We have that $D^k K_{\chi}(P)$ is a positive multilinear operator, since if $X^1,\ldots,X^k$ are positive semidefinite, then by the formula in Theorem \ref{deriv_k_chi}, $D^k(P)(X^1,\ldots,X^k)$ is the restriction of a positive semidefinite operator to an invariant subspace, and thus is positive semidefinite. 

Therefore, 
$$\Vert D^k K_{\chi}(T)\Vert = \Vert D^k K_{\chi}(P)\Vert=\Vert D^k K_{\chi}(P)(I,I,\ldots ,I)\Vert.$$

Now we have to find the maximum eigenvalue of the operator $D^k K_{\chi}(P)(I,I,\ldots ,I)$. This will be done by finding a basis of $V_{\chi}$ formed by eigenvectors for $D^k K_{\chi}(P)(I,I,\ldots ,I)$. If $E=\{e_1,\ldots, e_n\}$ is an orthonormal basis of eigenvectors for $P$, then $\lbrace e^*_{\alpha}: \alpha \in \widehat{\Delta} \rbrace$ will be a basis of eigenvectors for $D^k K_{\chi}(P)(I,I,\ldots ,I)$ (in general, it will not be orthonormal).\medskip

For $\beta\in Q_{m-k,k}$, define $\otimes^m_\beta P$ as the tensor $X^1\otimes\cdots \otimes X^m$, in which $X^i=P$ if $i\in \im \beta$ and $X^i=I$ otherwise. \medskip

\begin{lma} We refer to the notation we have established so far. 
\begin{enumerate}
\item We have 
$$\underbrace{P\tilde \otimes\cdots \tilde \otimes P}_{m-k\text{ times}}\tilde \otimes I \tilde \otimes \cdots \tilde \otimes I= \frac{k!(m-k)!}{m!} \sum_{\beta\in Q_{m-k,k}} \otimes^m_\beta P.$$

\item  Let $v_1,\ldots,v_m$ be eigenvectors for $P$ with eigenvalues $\lambda_1,\ldots,\lambda_m$. Then
$$\sum_{\beta\in Q_{m-k,m}} \otimes^m_\beta P (v_1\otimes\cdots \otimes v_m) = p_{m-k}(\lambda_1,\ldots,\lambda_m)v_1\otimes \ldots \otimes v_m$$
\end{enumerate}
\end{lma}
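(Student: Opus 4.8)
The plan is to treat the two parts separately: Part 1 is a combinatorial rearrangement of the symmetrized product, while Part 2 is a direct computation on eigenvectors.

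For Part 1, I would begin by unravelling the definition of the symmetrized tensor product. Setting $X^i = P$ for $1\le i\le m-k$ and $X^i = I$ for $m-k+1\le i\le m$, the left-hand side is by definition
$$\frac{1}{m!}\sum_{\sigma\in S_m} X^{\sigma(1)}\otimes\cdots\otimes X^{\sigma(m)}.$$
The key observation is that each summand depends on $\sigma$ only through the set of positions that carry a factor $P$, namely $\sigma^{-1}(\{1,\ldots,m-k\})$, which is a subset $S\subseteq\{1,\ldots,m\}$ of cardinality $m-k$. I would therefore partition $S_m$ according to the value of $S$; for a fixed such $S$ the summand equals $\otimes^m_\beta P$, where $\beta\in Q_{m-k,m}$ is the strictly increasing sequence with $\im\beta = S$.

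The heart of the argument is then a counting step: for each fixed subset $S$ of size $m-k$, I would count the permutations $\sigma$ with $\sigma^{-1}(\{1,\ldots,m-k\}) = S$. Such a $\sigma$ must map $S$ bijectively onto $\{1,\ldots,m-k\}$ and its complement bijectively onto $\{m-k+1,\ldots,m\}$, so there are exactly $(m-k)!\,k!$ of them. Substituting this count and using the bijection $\beta\mapsto\im\beta$ between $Q_{m-k,m}$ and the $(m-k)$-subsets of $\{1,\ldots,m\}$ produces the prefactor $\tfrac{k!(m-k)!}{m!}$ and finishes Part 1. I expect this factorial count to be the only genuine obstacle; once the grouping of permutations by support set is in place, everything else is substitution.

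For Part 2, I would simply evaluate each operator on the decomposable tensor. If $\im\beta = S$, then applying $P$ in the positions of $S$ and $I$ elsewhere sends $v_1\otimes\cdots\otimes v_m$ to $\bigl(\prod_{i\in S}\lambda_i\bigr)\,v_1\otimes\cdots\otimes v_m$, since each $v_i$ is an eigenvector of $P$ with eigenvalue $\lambda_i$. Summing over $\beta\in Q_{m-k,m}$, that is, over all $(m-k)$-subsets $S$, gives the scalar
$$\sum_{\substack{S\subseteq\{1,\ldots,m\}\\ |S|=m-k}}\ \prod_{i\in S}\lambda_i = p_{m-k}(\lambda_1,\ldots,\lambda_m),$$
which is precisely the (elementary) symmetric polynomial of degree $m-k$. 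This establishes the stated identity, and Part 2 follows immediately from the eigenvector property with no further difficulty.
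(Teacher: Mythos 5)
Your proof is correct and takes essentially the same approach as the paper: the paper's Part 1 argument just asserts that the factors $k!$ and $(m-k)!$ ``account for the permutations of $I$ and $P$ in each summand,'' which is exactly the grouping of $\sigma\in S_m$ by the support set $\sigma^{-1}(\{1,\ldots,m-k\})$ that you carry out explicitly, and your Part 2 computation is identical to the paper's. (You also correctly take the index set to be $Q_{m-k,m}$, silently repairing the typo $Q_{m-k,k}$ in the statement of Part 1.)
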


\begin{proof} 1. It is a matter of carrying out the computations. The factors $k!$ and $(m-k)!$ account for the permutations of $I$ and $P$, respectively, in each summand. We note that the final factor is exactly the inverse of the number of summands, as in the definition of $P\tilde \otimes\cdots \tilde \otimes P\tilde \otimes I \tilde \otimes \cdots \tilde \otimes I$.\bigskip

2. For each $\beta\in Q_{m-k,m}$ we have that $$\otimes^m_\beta P (v_1\otimes\cdots \otimes v_m)=\displaystyle \prod_{i=1}^{m-k} \lambda_{\beta(i)}(v_1\otimes\cdots \otimes v_m).$$
So, 
\begin{eqnarray*}
\sum_{\beta\in Q_{m-k,m}} \otimes^m_\beta P (v_1\otimes\cdots \otimes v_m) &=& \sum_{\beta\in Q_{m-k,m}}\displaystyle \prod_{i=1}^{m-k} \lambda_{\beta(i)}(v_1\otimes\cdots \otimes v_m)\\
 &=&  p_{m-k}(\lambda_1,\ldots,\lambda_m)v_1\otimes \ldots \otimes v_m.
\end{eqnarray*}
This concludes the proof.\end{proof}

The following proposition gives the expression for the eigenvalues of $D^kK_{\chi}(P)(I,I,\ldots ,I)$. 

\begin{prop}\label{eigenvalues_dk}
Let $\alpha \in \widehat{\Delta}$ and define 
$$\lambda(\alpha):=k!\, p_{m-k}(\nu_\alpha).$$
Then $\lambda(\alpha)$ is the eigenvalue of $D^k K_{\chi}(P)(I,I,\ldots ,I)$ associated with the eigenvector   $e^*_{\alpha}$.
\end{prop}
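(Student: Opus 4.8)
The plan is to evaluate the operator $D^k K_\chi(P)(I,\ldots,I)$ explicitly and then read off its action on each basis vector $e_\alpha^*$, reducing everything to a computation on the decomposable tensors $e_\alpha^\otimes$ where the two parts of the Lemma apply directly.

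First I would specialize Theorem \ref{deriv_k_chi} to $X^1=\cdots=X^k=I$, obtaining
\[
D^k K_\chi(P)(I,\ldots,I)=\frac{m!}{(m-k)!}\,\underbrace{P*\cdots*P}_{m-k}*\underbrace{I*\cdots*I}_{k}.
\]
Using the factorization $T_1*\cdots*T_m=Q^*(T_1\tilde\otimes\cdots\tilde\otimes T_m)Q$ from the proof of Theorem \ref{deriv_k_chi}, together with part 1 of the Lemma, the scalar factors collapse and this becomes
\[
D^k K_\chi(P)(I,\ldots,I)=k!\,Q^*MQ,\qquad M:=\sum_{\beta\in Q_{m-k,m}}\otimes^m_\beta P,
\]
since $\tfrac{m!}{(m-k)!}\cdot\tfrac{k!(m-k)!}{m!}=k!$. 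Thus it suffices to show that $e_\alpha^*$ is an eigenvector of $Q^*MQ$ with eigenvalue $p_{m-k}(\nu_\alpha)$.

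Next I would check that $M$ commutes with $K_\chi$. Conjugating a single summand, $P(\sigma)\,(\otimes^m_\beta P)\,P(\sigma)^{-1}$ merely relocates the copies of $P$ to the positions $\sigma(\im\beta)$, so conjugation by $P(\sigma)$ permutes the summands of $M$ among themselves; hence $P(\sigma)M=MP(\sigma)$ for every $\sigma\in S_m$, and therefore $MK_\chi=K_\chi M$. In particular $V_\chi$ is invariant under $M$, so $Q^*MQ$ is just the restriction $M|_{V_\chi}$, and for $\alpha\in\widehat\Delta$ we may compute
\[
M e_\alpha^*=M K_\chi e_\alpha^\otimes=K_\chi\bigl(M e_\alpha^\otimes\bigr).
\]
Since $e_{\alpha(1)},\ldots,e_{\alpha(m)}$ are eigenvectors of $P$ with eigenvalues $\nu_{\alpha(1)},\ldots,\nu_{\alpha(m)}$, part 2 of the Lemma gives $M e_\alpha^\otimes=p_{m-k}(\nu_\alpha)\,e_\alpha^\otimes$; applying $K_\chi$ yields $M e_\alpha^*=p_{m-k}(\nu_\alpha)\,e_\alpha^*$. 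Multiplying by $k!$ produces the claimed eigenvalue $\lambda(\alpha)=k!\,p_{m-k}(\nu_\alpha)$.

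The main obstacle, modest as it is, is the passage from the full tensor power to the symmetry class: part 2 of the Lemma describes the action of $M$ on $\otimes^m V$, whereas the proposition concerns the symmetrized tensors $e_\alpha^*=K_\chi e_\alpha^\otimes$. The commutation $MK_\chi=K_\chi M$ is exactly what bridges this gap, and the hypothesis $\alpha\in\widehat\Delta$ is what guarantees $e_\alpha^*\neq 0$, so that the identity above exhibits a genuine eigenvector rather than a trivial relation.
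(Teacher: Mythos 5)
Your proof is correct, and although it rests on the same two ingredients as the paper's --- Theorem \ref{deriv_k_chi} specialized at $(I,\ldots,I)$ and the two-part Lemma --- it handles the passage from $\otimes^m V$ to $V_\chi$ by a different mechanism. The paper works at the level of vectors: it expands $e_\alpha^*=\frac{\chi(\id)}{m!}\sum_{\sigma\in S_m}\chi(\sigma)e^\otimes_{\alpha\sigma}$, applies part 2 of the Lemma to each decomposable summand $e^\otimes_{\alpha\sigma}$, and then invokes the symmetry of $p_{m-k}$ to see that every term acquires the same scalar, $p_{m-k}(\nu_{\alpha\sigma})=p_{m-k}(\nu_\alpha)$, so the sum reassembles into $e_\alpha^*$. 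You instead prove the operator identity: $M=\sum_{\beta\in Q_{m-k,m}}\otimes^m_\beta P$ commutes with every $P(\sigma)$ because conjugation by $P(\sigma)$ merely permutes the index set of the sum, hence $M$ commutes with $K_\chi$, and the Lemma then needs to be applied only once, to $e_\alpha^\otimes$ itself. The symmetry of $p_{m-k}$ that the paper uses term-by-term is absorbed in your argument into the $S_m$-invariance of the sum defining $M$ --- the same fact, packaged structurally rather than computationally. Your route makes explicit two points the paper leaves tacit: that $V_\chi$ is invariant under $M$, so that $Q^*MQ$ really is the restriction $M|_{V_\chi}$, and that $\alpha\in\widehat\Delta$ guarantees $e_\alpha^*\neq 0$, so the asserted eigenvector is genuine. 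The paper's direct expansion, in turn, stays self-contained at the level of basis vectors and never needs the commutation relation. One bookkeeping remark: the Lemma's part 1 is stated in the paper with $\beta\in Q_{m-k,k}$, which is a slip for $Q_{m-k,m}$ (as part 2 and the paper's own proof of the Proposition confirm); your consistent use of $Q_{m-k,m}$ is the intended reading.
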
 

\begin{proof}
Recall that $E=\{e_1,\ldots, e_n\}$ is an orthonormal basis of eigenvectors for $P$, with eigenvalues $\nu_1,\ldots,\nu_n$. For every $\alpha \in \Gamma_{m,n}$ we have $$e^*_{\alpha}=\dfrac{\chi(\id)}{m!}\sum_{\sigma \in S_m}\chi(\sigma)e^\otimes_{\alpha \sigma}.$$
Then
\begin{eqnarray*}
D^kK_\chi (P)(I,\ldots,I)(e_\alpha^*) & = & 
\frac{m!}{(m-k)!} ( P\tilde \otimes\cdots \tilde \otimes P\tilde \otimes I \tilde \otimes \cdots \tilde \otimes I ) (e_\alpha^*)\\
& = & \frac{m!}{(m-k)!} \frac{k! (m-k)!}{m!} \sum_{\beta\in Q_{m-k,m}}  \otimes_\beta^m P(e_\alpha^*)\\
& = & k! \sum_{\sigma\in S_m}\chi(\sigma)�\sum_{\beta\in Q_{m-k,m}}  \otimes_\beta^m P(e_{\alpha\sigma}^\otimes)\\
& = & k! \sum_{\sigma\in S_m}\chi(\sigma) p_{m-k}(\nu_{\alpha\sigma}) e_{\alpha\sigma}^\otimes\\
& = & k! \sum_{\sigma\in S_m}\chi(\sigma) p_{m-k}(\nu_{\alpha}) e_{\alpha\sigma}^\otimes\\
& = & k! p_{m-k}(\nu_\alpha) e_\alpha^*
\end{eqnarray*}
In the last equations we used the previous lemma and the symmetry of the polynomial $p_{m-k}$. 
So the eigenvalue associated with $ e^*_{\alpha}$ is $\lambda(\alpha)$.\end{proof}

We have obtained the expression for all the eigenvalues of the operator $D^k K_{\chi}(P)(I,\ldots ,I)$, now we have to find the largest one.

\begin{lma}
If $\alpha$ and $\beta$ are in the same orbit, then $\lambda(\alpha)=\lambda(\beta)$.
\end{lma}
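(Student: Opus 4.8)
The plan is to reduce the statement to the defining property of $p_{m-k}$, namely its invariance under permutations of its $m$ arguments. The quantity $\lambda(\alpha)=k!\,p_{m-k}(\nu_\alpha)$ depends on $\alpha$ only through the $m$-tuple $\nu_\alpha=(\nu_{\alpha(1)},\ldots,\nu_{\alpha(m)})$, so it suffices to show that replacing $\alpha$ by another element of its orbit merely permutes the coordinates of this tuple, which leaves the symmetric polynomial unchanged.

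Concretely, I would proceed as follows. Since $\alpha$ and $\beta$ lie in the same orbit under the $S_m$-action introduced earlier, by the description of the orbit as $\{\alpha\sigma:\sigma\in S_m\}$ there is a permutation $\sigma\in S_m$ with $\beta=\alpha\sigma$. Then for each $j\in\{1,\ldots,m\}$ one has $\beta(j)=\alpha(\sigma(j))$, so
$$\nu_\beta=(\nu_{\alpha(\sigma(1))},\ldots,\nu_{\alpha(\sigma(m))}),$$
which is exactly the tuple $\nu_\alpha$ with its coordinates reordered by $\sigma$. Invoking the symmetry of $p_{m-k}$ in its $m$ variables gives $p_{m-k}(\nu_\beta)=p_{m-k}(\nu_\alpha)$, and multiplying by $k!$ yields $\lambda(\beta)=\lambda(\alpha)$, as required.

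There is no substantive obstacle here; the whole content is the observation that orbit membership is precisely the relation ``$\beta$ is $\alpha$ precomposed with a permutation.'' The only point demanding a little care is bookkeeping with the convention for the group action: the action was defined via $\alpha\mapsto\alpha\sigma^{-1}$, so I must confirm that the orbit is $\{\alpha\sigma:\sigma\in S_m\}$ (as stated in the excerpt) and that $\beta=\alpha\sigma$ indeed permutes the entries of $\nu_\alpha$ rather than doing something subtler. Since any $\sigma\in S_m$ and its inverse induce the same reordering of the multiset of coordinates, the conclusion is insensitive to this convention, and the symmetry of $p_{m-k}$ closes the argument.
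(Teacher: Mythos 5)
Your proposal is correct and matches the paper's proof essentially verbatim: the paper also takes $\sigma\in S_m$ with $\beta=\alpha\sigma$ and concludes $p_{m-k}(\nu_\beta)=p_{m-k}(\nu_{\alpha\sigma})=p_{m-k}(\nu_\alpha)$ by the symmetry of the elementary symmetric polynomial. Your extra remark about the $\alpha\mapsto\alpha\sigma^{-1}$ convention being harmless is a fair piece of care the paper leaves implicit, but it changes nothing substantive.
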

\begin{proof} 
If $\alpha$ and $\beta$ are in the same orbit, then there is $\sigma \in S_m$ such that $\alpha \sigma= \beta$. So by the definition of the symmetric elementary polynomials, we have  
$$p_{m-k}(\nu_\beta)=p_{m-k}(\nu_{\alpha\sigma})=p_{m-k}(\nu_\alpha).$$
This concludes the proof.\end{proof} 

According to \cite{Me}, every orbit has a representative in $G_{m,n}$, and this is the first element in each orbit (for the lexicographic order). Therefore, the norm of the $k$-th derivative of $K_{\chi}(T)$ is attained at some $\lambda(\alpha)$ with $\alpha \in \overline{\Delta} \subseteq G_{m,n}$.
We now compare eigenvalues coming from different elements of $\overline\Delta$.

\begin{lma}\label{prec_lambda}
Let $\alpha, \beta $ be elements of $\overline{\Delta}\subseteq G_{m,n}$. Then $\lambda(\alpha)\geq\lambda(\beta)$ if and only if   $\alpha$ precedes $\beta$ in the lexicographic order.
\end{lma}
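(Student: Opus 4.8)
The plan is to prove the equivalence by isolating its substantive half and then packaging it with the totality of the order. Since the lexicographic order is a total order on $\overline{\Delta}$, it suffices to establish the forward implication, namely that $\alpha$ preceding $\beta$ forces $\lambda(\alpha)\geq\lambda(\beta)$: for distinct $\alpha,\beta$ exactly one precedes the other, so applying the forward implication to whichever precedes gives the comparison of the $\lambda$'s in the matching direction, and the stated biconditional follows (the only point needing care is the tie case, where repeated singular values can make $\lambda(\alpha)=\lambda(\beta)$ for lex-incomparable arguments, so the ``only if'' is read up to such equalities). Thus the whole statement reduces to one monotonicity claim about $p_{m-k}(\nu_{\alpha})$ versus $p_{m-k}(\nu_{\beta})$.

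The engine for that claim is that $p_{m-k}$, being the elementary symmetric polynomial in nonnegative arguments, is nondecreasing in each variable: if $x_j\geq y_j$ for every $j$ then $p_{m-k}(x_1,\ldots,x_m)\geq p_{m-k}(y_1,\ldots,y_m)$. Ordering the singular values as $\nu_1\geq\cdots\geq\nu_n\geq 0$, the componentwise index bound $\alpha(j)\leq\beta(j)$ for all $j$ yields $\nu_{\alpha(j)}\geq\nu_{\beta(j)}$ for all $j$, hence $\lambda(\alpha)=k!\,p_{m-k}(\nu_\alpha)\geq k!\,p_{m-k}(\nu_\beta)=\lambda(\beta)$. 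So the task becomes: deduce from the lexicographic precedence of $\alpha$ over $\beta$ the componentwise bound $\alpha(j)\leq\beta(j)$ for every $j$. Letting $j_0$ be the first index at which the two increasing sequences differ, precedence gives $\alpha(j)=\beta(j)$ for $j<j_0$ and $\alpha(j_0)<\beta(j_0)$, so the bound is immediate up to $j_0$; everything hinges on propagating it to the tail $j>j_0$.

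Propagating the bound past $j_0$ is the step I expect to be the main obstacle, and it is genuinely delicate. Lexicographic precedence controls only the first disagreement, while an increasing sequence may overshoot afterwards: the prototype is a \emph{carry} such as $\alpha=(1,3)$ against $\beta=(2,2)$ (for $m=2$, with analogous configurations for larger $m$), where $\alpha$ precedes $\beta$ yet $\alpha(2)=3>2=\beta(2)$, so $\nu_\alpha$ and $\nu_\beta$ are not componentwise comparable and $p_{m-k}(\nu_\alpha)$, $p_{m-k}(\nu_\beta)$ can be ordered either way depending on the numerical values of the $\nu_i$. The only additional information available to rule such carries out is the membership $\alpha,\beta\in\overline{\Delta}$, that is, $\mu(\alpha)\preceq\chi$ and $\mu(\beta)\preceq\chi$ via Theorem \ref{Merris}. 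The heart of the proof is therefore a purely combinatorial claim: that under these multiplicity constraints lexicographic precedence already forces the componentwise bound, which I would attack by an induction on $j_0$ that tracks, for each value $v$, the count $|\{j:\alpha(j)\leq v\}|$ against $|\{j:\beta(j)\leq v\}|$ and controls both through the partial sums of $\chi$. Because $\lambda$ depends on the actual singular values while the lexicographic order does not, this constraint is indispensable rather than cosmetic, and pinning down exactly which carries the condition $\mu\preceq\chi$ excludes — as opposed to only those arising in comparisons against the extremal index $\omega(\chi)$ — is precisely the delicate point on which the argument stands or falls.
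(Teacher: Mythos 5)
Your proposal is not a complete proof, and the gap you flagged is fatal: you reduce everything to the combinatorial claim that, for $\alpha,\beta\in\overline{\Delta}$, lexicographic precedence plus the constraints $\mu(\alpha)\preceq\chi$, $\mu(\beta)\preceq\chi$ force the componentwise bound $\alpha(j)\leq\beta(j)$, and you leave that claim open. In fact the claim is false, and so is the lemma as stated. Take $m=2$, $k=1$ and $\chi=(2)$ the principal character, for which the paper itself records $\overline{\Delta}=G_{2,n}$; then your own ``carry'' $\alpha=(1,3)$, $\beta=(2,2)$ lies entirely inside $\overline{\Delta}$, since $\mu(\alpha)=(1,1)$ and $\mu(\beta)=(2)$ are both majorized by $(2)$. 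With singular values $\nu=(1,1,0)$ one gets $\lambda(\alpha)=\nu_1+\nu_3=1<2=2\nu_2=\lambda(\beta)$, although $\alpha$ precedes $\beta$ lexicographically. The same phenomenon occurs even for the alternating character: $\alpha=(1,4)$, $\beta=(2,3)$ in $Q_{2,n}$ with $\nu=(1,1,1,0)$. So membership in $\overline{\Delta}$ does not exclude carries; your suspicion that the argument ``stands or falls'' on this point was exactly right, and it falls. (Your caveat about the ``only if'' direction is also correct: with all $\nu_i$ equal, $\lambda$ is constant on $\overline{\Delta}$ and the stated biconditional fails outright.)

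For comparison, the paper's own proof of this lemma is the single sentence that the result ``follows directly'' from Proposition \ref{eigenvalues_dk} --- it is no proof, and it cannot be completed in the stated generality. What is true, and is all that the proof of Theorem \ref{main_thm} actually uses, is the special case $\alpha=\omega(\chi)$: by Theorem \ref{Merris}, every $\beta\in\overline{\Delta}$ satisfies $\mu(\beta)\preceq\chi$, and this majorization yields the componentwise bound $\omega(\chi)(j)\leq\beta(j)$ for all $j$. Indeed, fixing $j$ and setting $v=\beta(j)$, the entries of $\beta$ not exceeding $v$ take at most $v$ distinct values, so their number is at most $\mu(\beta)_1+\cdots+\mu(\beta)_v\leq\chi_1+\cdots+\chi_v$, which is exactly the number of entries of $\omega(\chi)$ not exceeding $v$; hence $j\leq\chi_1+\cdots+\chi_v$ and $\omega(\chi)(j)\leq v$. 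Your monotonicity engine (elementary symmetric polynomials are nondecreasing in each nonnegative variable) then gives $\lambda(\omega(\chi))\geq\lambda(\beta)$, which is the inequality the main theorem needs. So the correct repair is to replace the lemma by this one-sided statement about the componentwise-least element $\omega(\chi)$, not to compare arbitrary pairs of elements of $\overline{\Delta}$. Your blind analysis located the genuine difficulty more accurately than the paper's one-line proof, but as written the proposal neither proves the lemma nor can any argument, since the lemma is false.
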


\begin{proof}
The result follows directly from the expression of the eigenvalues of $D^k K_{\chi}(P)(I,\ldots ,I)$ given in Proposition \ref{eigenvalues_dk}.
\end{proof}

We are now ready to complete the proof of the main theorem. From now on we will also write  $\chi$ to represent also the partition of $m$ associated with the irreducible character $\chi$. 

\begin{proof} (of Theorem \ref{main_thm}). We have that $\omega(\chi) \in \overline{\Delta}$, so we must have $$\Vert D^k K_{\chi}(P)(I,\ldots ,I) \Vert \geq \lambda(\omega(\chi)).$$
Now let $\alpha \in \overline \Delta$. Using the results from Theorem \ref{Merris} and Remark \ref{mult_part}, we have that $\chi = \mu(\omega(\chi))$ majorizes $\mu{(\alpha)}$. By the definition of multiplicity partition, we have that $\omega(\chi)$ precedes $\alpha$ in the lexicographic order. By Lemma \ref{prec_lambda}, we then have $\lambda(\omega(\chi)) \geq \lambda(\alpha)$ and
$$\Vert D^k K_{\chi}(T) \Vert = \Vert D^k K_{\chi}(P)(I,\ldots ,I) \Vert = \lambda(\omega(\chi)) = k!\, p_{m-k}(\nu_{\omega(\chi)}).$$ 
This concludes the proof of the theorem.
\end{proof}

Now we will see that the formulas obtained by Jain \cite{J} and Grover \cite{G} are particular cases of this last formula.
 
If $\chi=(m,0, \cdots,0)$ then $K_\chi(T)=\vee^m T$. In this case $\nu_{\omega(\chi)}=(\nu_1,\ldots,\nu_1).$ So we have 
\begin{eqnarray*}
\Vert D^k \vee^m T \Vert &= &k!p_{m-k}(\nu_{\omega(\chi)})\\
& = & k!p_{m-k}(\nu_1, \nu_1, \ldots, \nu_1)\\
& = & k!\dbinom{m}{k}\nu_1^{m-k}=\dfrac{m!}{(m-k)!}\nu_1^{m-k}\\
& = &\dfrac{m!}{(m-k)!}\Vert T \Vert ^{m-k}
\end{eqnarray*}
   
Also, if $\chi=(1,1,\cdots,1)$, then $K_\chi(T)=\wedge^m T$ and $\nu_{\omega(\chi)}=(\nu_1, \nu_2,\cdots, \nu_m).$ In this case we have that 
$$\Vert D^k \wedge^m T \Vert =  k!\,p_{m-k}(\nu_1,\nu_2,\cdots,\nu_m),$$
where $p_{m-k}(\nu_1,\nu_2,\cdots,\nu_m)$ is the symmetric elementary polynomial of degree $m-k$ calculated on the top $m$ singular values of $T$.\medskip
   
Our main formula also generalizes the result for the norm of the first derivative of $K_\chi(T)$ obtained by R.\ Bhatia and J.\ Dias da Silva in \cite{BDS}. Just notice that if $k=1$, we have that $Q_{1,m}=\lbrace 1,2,\cdots,m \rbrace$, so 
\begin{eqnarray*}
\Vert D K_{\chi}(T) \Vert & = & p_{m-1}(\nu_{\omega(\chi)})\\
& = & \nu_{\omega(\chi)(2)}\nu_{\omega(\chi)(3)}\cdots \nu_{\omega(\chi)(m)}+\nu_{\omega(\chi)(1)}\nu_{\omega(\chi)(3)}\cdots \nu_{\omega(\chi)(m)} + \ldots\\
&& \ldots + \nu_{\omega(\chi)(1)}\nu_{\omega(\chi)(2)}\cdots \nu_{\omega(\chi)(m-1)}\\
& = & \displaystyle \sum_{j=1}^m \displaystyle \prod_{i=1 \atop{i \neq j}}^m \nu_{\omega(\chi)(i)}.
\end{eqnarray*}

\section{Norm of the $k$-th derivative of the immanant}

We now wish to establish an upper bound for the $k$-th derivative of the immanant, which is defined as
$$d_{\chi}(A)=\sum_{\sigma \in S_n} \chi(\sigma)\displaystyle \prod_{i=1}^n a_{i\sigma(i)},$$
where $A$ is a complex $n\times n$ matrix. For this, we recall the definition of $K_{\chi}(A)$, the $m$-th $\chi$-symmetric tensor power of the matrix $A$. We find this definition in \cite{CF}, along with other results we will now use.

A natural way to define this matrix is to fix an orthonormal basis $E$ in $V$, and consider the linear endomorphism $T$ such that $A=M(T,E)$. Define $\mathcal{E'}=(e^*_{\alpha}:\alpha \in \widehat{\Delta})$. 
Let $ \mathcal{E}=(v_\alpha: \alpha \in \widehat \Delta)$ be the orthonormal basis of the $m$-th $\chi$-symmetric tensor power of the vector space $V$ obtained by applying the Gram-Schmidt orthonormalization procedure to $\cal E'$. We define
$$K_\chi(A):=M(K_\chi(T),{\cal E})$$
The matrix $K_\chi(A)$ has rows and columns indexed in $\widehat \Delta$, with  $Q_{m,n}\subseteq \widehat \Delta$. This definition admits, as special cases, the $m$-th compound and the $m$-th induced power of a matrix, as defined in \cite[p.\ 236]{Me}.\medskip 

Since the basis chosen in $V_\chi$ is orthonormal, the result for the norm of the operator applies to this matrix: 
$$\Vert K_\chi(A) \Vert\leq k!\,  p_{m-k} (\nu_{\omega(\chi)}),$$
where $\nu_1 \geq\ldots \geq\nu_n$ are the singular values of $A$. This upper bound is what we will need for the main result in this section. \medskip

Before that, we present an explicit formula for $K_\chi(A)$, from \cite{CF}, and show a similarity between the formulas for $D^kK_\chi(T)$, obtained in Theorem \ref{deriv_k_chi}, and for $D^k K_\chi(A)$, obtained in \cite{CF}.\medskip

Denote by $\imm_{\chi} (A)$ the square matrix with rows and columns indexed by $\widehat \Delta$, whose $(\gamma, \delta)$ entry is $d_{\chi}(A[ \gamma | \delta])$.  Let $B=(b_{\alpha\beta})$ be the change of basis matrix from $\mathcal{E}$ to $\mathcal{E'}$. This means that for each $\alpha\in \widehat \Delta$, 
$$v_\alpha = \sum_{\gamma \in \widehat \Delta} b_{\gamma\alpha} e^*_\gamma.$$
This matrix $B$ does not depend on the choice of the basis $E$ as long as it is orthonormal (it encodes the Gram-Schmidt procedure applied to ${\cal E}'$). 

With these matrices, we can write 
\begin{equation}
K_{\chi}(A) = \frac{\chi(\id)}{m!} B^* \imm_{\overline\chi}(A) B.\label{chi_power_matrix}
\end{equation}

The {\it mixed immanant} of $X^1,\ldots, X^n$ is defined as 
$$\Delta_\chi(X^1,\ldots,X^n):=\frac{1}{n!}\sum_{\sigma\in S_n} d_\chi (X^{\sigma(1)}_{[1]},\ldots, X^{\sigma(n)}_{[n]}),$$
where $X^i_{[j]}$ denotes the $j$-th column of the matrix $X^i$.  If $X^1=\ldots =X^t=A$, for some $t\leq n$ and $A\in M_n(\C)$, we denote the mixed immanant by $\Delta_\chi(A;X^{t+1},\ldots,X^n)$.

Denote by $\miximm_{\chi}(X^1,\ldots,X^n)$ the square matrix with rows and columns indexed by $\widehat \Delta$, whose $(\gamma,\delta)$ entry is $\Delta_{\chi}( X^1[\gamma | \delta],\ldots ,X^n [ \gamma | \delta])$, so that $\miximm_{\chi} (A,\ldots,A)=\imm_{\chi}(A)$. We use the same shorthand as with the mixed immanant: for $k\leq n$, 
$$\miximm_{\chi}(A;X^1,\ldots,X^k):=\miximm_\chi (A,\ldots,A,X^1,\ldots X^k)$$

We have
$$D^k K_{\chi}(A)(X^1,\ldots ,X^k)=\frac{\chi(\id)}{(m-k)!} B^* \miximm_{\overline{\chi}}(A;X^1,\ldots,X^k) B$$

Notice the similarity with the formula in Theorem \ref{deriv_k_chi}. In fact, if $X^i$ is the matrix of the operator $S^i\in {\cal L}(V)$ with respect to the orthonormal basis $\{e_1,\ldots,e_n\}$, then the previous matrix is the matrix of $D^kK_\chi (T)(S^1,\ldots,S^k)$ with respect to the basis $\cal E$.\medskip

We now use the results on the norm in order to get an upper bound for the norm of the $k$-th derivative of the immanant. 

\begin{theorem} Keeping with the notation established, we have that, for $k\leq n$,
$$\Vert D^kd_\chi(A) \Vert \leq k!\, p_{n-k} (\nu_{\omega(\chi)})$$
\end{theorem}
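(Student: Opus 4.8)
The plan is to bridge the immanant with the induced operator $K_\chi$ and then invoke the norm formula already established in Theorem \ref{main_thm}. Throughout set $m=n$ and let $\varepsilon=(1,2,\ldots,n)\in G_{n,n}$, so that $e^*_\varepsilon=e_1*\cdots*e_n$ and $G_\varepsilon=\{\id\}$. The starting point is the classical identification of the immanant as a single entry of $K_\chi$: using property~2 of the Proposition, $K_\chi(T)e^*_\varepsilon=Te_1*\cdots*Te_n$; expanding the inner product of two decomposable symmetrised tensors (with $\langle Te_j,e_{\sigma(j)}\rangle=a_{\sigma(j)j}$, and using that every irreducible character of $S_n$ is real, so $\overline\chi=\chi$) I would show $\langle K_\chi(T)e^*_\varepsilon,e^*_\varepsilon\rangle=\frac{\chi(\id)}{n!}\,d_\chi(A)$. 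In the matrix language set up above, this says that $d_\chi(A)$ is, up to a scalar, the $(\varepsilon,\varepsilon)$ entry of $\imm_\chi(A)$.

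Next I would differentiate this identity $k$ times. Since $T\mapsto\langle K_\chi(T)e^*_\varepsilon,e^*_\varepsilon\rangle$ is the composition of the map $T\mapsto K_\chi(T)$ with the fixed linear functional $L\mapsto\langle L\,e^*_\varepsilon,e^*_\varepsilon\rangle$, the chain rule \eqref{chain_rule} lets $D^k$ pass through the functional, yielding
\[
D^k d_\chi(A)(X^1,\ldots,X^k)=\frac{n!}{\chi(\id)}\big\langle D^kK_\chi(T)(S^1,\ldots,S^k)\,e^*_\varepsilon,\,e^*_\varepsilon\big\rangle ,
\]
where $S^i$ is the operator whose matrix in $\{e_1,\ldots,e_n\}$ is $X^i$. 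The same relation can be read directly off the explicit $\miximm_\chi$ formula: it amounts to $D^kd_\chi(A)(X^1,\ldots,X^k)=\frac{n!}{(n-k)!}\,\Delta_\chi(A;X^1,\ldots,X^k)$, the scaled $(\varepsilon,\varepsilon)$ entry of the mixed-immanant matrix.

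Finally I would estimate the right-hand side. A diagonal entry of an operator relative to a fixed vector is controlled by the operator norm, so $|\langle D^kK_\chi(T)(S)\,e^*_\varepsilon,e^*_\varepsilon\rangle|\le\|D^kK_\chi(T)(S)\|\,\|e^*_\varepsilon\|^2$. Taking the supremum over operators of norm one, using $\|S^i\|=\|X^i\|$, Theorem \ref{main_thm} gives $\sup_{\|X^i\|=1}\|D^kK_\chi(T)(S)\|=k!\,p_{m-k}(\nu_{\omega(\chi)})$. It then remains to feed in the normalisation $\|e^*_\varepsilon\|^2=\frac{\chi(\id)}{n!}\sum_{\sigma\in G_\varepsilon}\chi(\sigma)=\frac{\chi(\id)^2}{n!}$ from \eqref{norma-tensores} and to assemble it with the prefactor $n!/\chi(\id)$; with $m=n$ this final bookkeeping is what produces the asserted constant $k!\,p_{n-k}(\nu_{\omega(\chi)})$.

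The main obstacle is precisely this assembly of normalising constants. The elementary diagonal estimate $|\langle M u,u\rangle|\le\|M\|$ is sharp only up to the factor $\|u\|^2=\|e^*_\varepsilon\|^2$, and the passage from the \emph{unnormalised} immanant to the orthonormal matrix $K_\chi(A)$ hides the change-of-basis matrix $B$ (equivalently the Gram data $\langle e^*_\gamma,e^*_\delta\rangle$). Tracking the combination of $n!/\chi(\id)$ against $\|e^*_\varepsilon\|^2$ exactly — rather than through an estimate inflated by the dimension $\chi(\id)$ — is the delicate step, and it is here that the orthogonality and normalisation of $\{e^*_\alpha:\alpha\in\widehat\Delta\}$, and the precise role of $B$ in the $\miximm_\chi$ formula, must be used with care. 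Everything else is a direct application of Theorem \ref{main_thm} and the identities already in place.
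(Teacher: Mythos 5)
Your outline follows the paper's own proof step for step: with $m=n$ and $\gamma=(1,\ldots,n)$ you identify $d_\chi(A)$ as the scaled $(\gamma,\gamma)$ entry of $\imm_\chi(A)$ (equivalently, $d_\chi(A)=\frac{n!}{\chi(\id)}\langle K_\chi(T)e^*_\gamma,e^*_\gamma\rangle$), pass $D^k$ through the fixed linear map via \eqref{chain_rule}, bound the diagonal entry by $\Vert D^kK_\chi(T)(S^1,\ldots,S^k)\Vert\,\Vert e^*_\gamma\Vert^2$, and invoke Theorem \ref{main_thm}; the paper does exactly this in matrix form, with $C=(B^{-1})_{[\gamma]}$ and $\Vert C\Vert_2=\Vert e^*_\gamma\Vert$ playing the role of your vector estimate. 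But the step you defer as ``final bookkeeping'' is a genuine gap, and in fact it cannot be closed to give the stated constant. Your own evaluation of \eqref{norma-tensores} is the correct one: since $G_\gamma=\{\id\}$, we get $\Vert e^*_\gamma\Vert^2=\frac{\chi(\id)}{n!}\sum_{\sigma\in G_\gamma}\chi(\sigma)=\frac{\chi(\id)^2}{n!}$, and then the constants assemble as $\frac{n!}{\chi(\id)}\cdot\frac{\chi(\id)^2}{n!}=\chi(\id)$, so your method yields $\Vert D^kd_\chi(A)\Vert\leq\chi(\id)\,k!\,p_{n-k}(\nu_{\omega(\chi)})$ --- an extra factor of $\chi(\id)$. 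No amount of care with the change-of-basis matrix $B$ removes it: orthonormality of ${\cal E}$ forces $\Vert C\Vert_2^2=\Vert e^*_\gamma\Vert^2$ exactly, so the matrix route gives the identical constant.

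You should also know that the paper's proof asserts $\Vert e^*_\gamma\Vert^2=\frac{\chi(\id)}{n!}$ at this point, which contradicts its own formula \eqref{norma-tensores} (the stabilizer sum equals $\chi(\id)$, not $1$); the discrepancy you sensed is therefore real, and the factor is not removable, because the stated inequality actually fails for nonlinear characters. Take $\chi=(2,1)$ for $S_3$, so $\chi(\id)=2$, and $k=n=3$: since $d_\chi$ is homogeneous of degree $3$, $D^3d_\chi(A)(I,I,I)=3!\,d_\chi(I)=3!\,\chi(\id)=12$, whereas the claimed bound is $3!\,p_0(\nu_{\omega(\chi)})=6$; the corrected bound $\chi(\id)\,k!\,p_{n-k}(\nu_{\omega(\chi)})=12$ is attained in this example. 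For linear characters ($\det$ and $\per$) one has $\chi(\id)=1$ and your argument, like the paper's, is complete and correct. In summary: your approach is the same as the paper's, but the deferred normalization is not routine bookkeeping --- it is precisely the point where the paper's proof errs, and your sketch as written proves only the bound with the extra $\chi(\id)$, not the stated one.
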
 
\begin{proof} We always have $Q_{m,n}\subseteq \widehat \Delta$. We now take $m=n$ and denote $\gamma:= (1,2,\ldots,n)\in Q_{n,n} \subseteq \widehat \Delta$ (this is the only element in $Q_{n,n}$). By definition, $d_\chi(A)$ is the $(\gamma,\gamma)$ entry of $\imm_\chi(A)$, and, according to formula \eqref{chi_power_matrix}, we have
$$\imm_\chi(A)=\frac{n!}{\chi(\id)}(B^*)^{-1}K_{\overline\chi}(A)B^{-1}.$$

Since multiplication by a constant matrix is a linear map, we have
$$D^k ((B^*)^{-1}K_{\overline\chi}(A)B^{-1})(X^1,\ldots,X^k)=(B^*)^{-1}D^k K_{\overline\chi}(A)(X^1,\ldots,X^k)B^{-1}.$$

We denote by $C$ the column $\gamma$ of the matrix $B^{-1}$: 
$$C=(B^{-1})_{[\gamma]}=(b'_{\alpha\gamma}),\quad \alpha\in \widehat\Delta.$$
Then
$$D^kd_\chi(A)(X^1,\ldots, X^k) = \frac{n!}{\chi(\id)} C^* D^k K_{\overline\chi}(A)(X^1,\ldots,X^k) C.$$

By formula \eqref{norma-tensores}, we have that
$$\Vert e_\gamma^* \Vert^2= \frac{\chi(\id)}{n!}.$$

By definition of the matrix $B$, we have 
$$e_\gamma^* = \sum_{\beta \in \widehat \Delta} b'_{\beta\gamma} v_\beta$$
with $C=[b'_{\beta\gamma}:\beta\in \widehat\Delta]$. Since the basis $(v_\alpha:\alpha\in \widehat \Delta)$ is orthonormal, we have
$$\Vert C \Vert^2 = \Vert C \Vert_2^2 = \Vert e_\gamma^* \Vert^2=\frac{\chi(\id)}{n!},$$
where $\Vert C\Vert_2$ is the Euclidean norm of $C$. Therefore,
\begin{eqnarray*}
\Vert D^k d_\chi(A)\Vert & = & \frac{n!}{\chi(\id)}\Vert C D^k K_{\chi}(A) C^* \Vert\\
& \leq & \frac{n!}{\chi(\id)}\Vert C \Vert^2 \Vert D^k K_{\chi}(A) \Vert\\
& = & k!\,  p_{n-k} (\nu_{\omega(\chi)}).
\end{eqnarray*}
This concludes the proof.
\end{proof}

In \cite{BJ} it is proved that this upper bound coincides with the norm of the derivative of the determinant. In \cite{BGJ}, when $d_\chi=\per$, the upper bound presented in formula (52) is, using our notation, $(n!/(n-k)!) \Vert A \Vert^{n-k}$.
Using our formula, we get the same value: for $\omega(\chi) =(1,1,\ldots,1)$, 
$$k!\, p_{n-k} (\nu_{\omega(\chi)})=k!\, \dbinom{n}{n-k} \nu_1^{n-k} = \frac{n!}{(n-k)!} \Vert A\Vert^{n-k}.$$
It is also shown that for
$$A=\begin{pmatrix} 1 & 0 \\ 0 & 0 \end{pmatrix}$$
we have strict inequality, for $d_\chi=\per$.\medskip

One of the purposes of having upper bounds for norms is the possibility of estimating the magnitude of perturbations.  Taylor's formula states that if $f$ is a $p$ times differentiable function between two normed spaces, then
$$f(a+x)-f(a) = \sum_{k=1}^p \frac{1}{k!} D^k f(a) (x,\ldots,x) + O(\Vert x \Vert^{p+1}).$$
Therefore,
$$\Vert f(a+x)-f(a)\Vert \leq \sum_{k=1}^p \frac{1}{k!} \Vert D^k f(a) \Vert \Vert x \Vert^k$$
Using our formulas, we get the following result. 

\begin{cor} According to our notation, we have, for $T,X\in {\cal L}(V)$ and $A,Y\in M_n(\C)$:
$$\Vert K_\chi(T) - K_\chi(T+X) \Vert \leq \sum_{k=1}^m p_{m-k}(\nu_{\omega(\chi)})\Vert X \Vert^k,$$
$$| d_\chi(A) - d_\chi(A+Y) | \leq \sum_{k=1}^n p_{n-k}(\nu_{\omega(\chi)})\Vert Y \Vert^k.$$
\end{cor}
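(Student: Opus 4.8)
The plan is to combine the two norm bounds already established in the paper with the Taylor estimate displayed immediately before the corollary. The corollary is really just a direct substitution: the Taylor inequality
$$\Vert f(a+x)-f(a)\Vert \leq \sum_{k=1}^p \frac{1}{k!} \Vert D^k f(a) \Vert \Vert x \Vert^k$$
is applied once with $f=K_\chi$ and once with $f=d_\chi$, and then the norms of the derivatives are replaced by the exact value (resp.\ upper bound) computed earlier. The main conceptual content has already been done in Theorem \ref{main_thm} and the immanant theorem; what remains is to verify that the hypotheses of Taylor's formula are met and that the degrees match up correctly.

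First I would fix the order of differentiation. For $K_\chi(T)$, the map $T\mapsto K_\chi(T)$ is a polynomial map of degree $m$ in the entries of $T$ (being the restriction of $\otimes^m T$), so all derivatives of order greater than $m$ vanish and Taylor's formula holds exactly with $p=m$ and no remainder term. Likewise $d_\chi(A)$ is a polynomial of degree $n$ in the entries of $A$, so Taylor's formula holds with $p=n$ and no remainder. This is the point worth stating explicitly, since it is what lets the authors write a finite sum with no $O(\cdot)$ term; I would note that the vanishing of high-order derivatives was already recorded in Theorem \ref{deriv_k_chi} (all derivatives are zero for $k>m$) and carries over verbatim to the immanant case.

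Next I would substitute the known bounds into each sum. For the operator inequality, apply Taylor with $a=T$ and $x=X$ and use
$$\Vert D^k K_\chi(T)\Vert = k!\,p_{m-k}(\nu_{\omega(\chi)})$$
from Theorem \ref{main_thm}; the factor $k!$ cancels the $1/k!$ in Taylor's formula, leaving exactly $\sum_{k=1}^m p_{m-k}(\nu_{\omega(\chi)})\Vert X\Vert^k$. For the immanant inequality, apply Taylor with $a=A$, $x=Y$, take $p=n$, and use the bound $\Vert D^k d_\chi(A)\Vert \leq k!\,p_{n-k}(\nu_{\omega(\chi)})$ from the preceding theorem; again the $k!$ cancels and one obtains $\sum_{k=1}^n p_{n-k}(\nu_{\omega(\chi)})\Vert Y\Vert^k$. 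The only small asymmetry to flag is that the first estimate is an exact-norm substitution while the second inherits the inequality from the immanant theorem, but both are governed by the same Taylor bound, so the argument is uniform.

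The step I expect to require the most care is purely bookkeeping rather than genuinely hard: making sure the index $m-k$ (resp.\ $n-k$) in $p_{m-k}$ matches the derivative order $k$ correctly through the sum, and confirming the cancellation of $k!$ is legitimate for every $k$ in the range $1\le k\le m$ (resp.\ $1\le k\le n$). Since Theorem \ref{main_thm} requires $1\le k\le m\le n$, I would remark that for $k=m$ the term $p_0=1$ contributes $\Vert X\Vert^m$ with no dependence on the singular values, consistent with the statement in Theorem \ref{deriv_k_chi} that the top derivative does not depend on $T$. There is no real obstacle here; the proof is a two-line application of Taylor's formula to each polynomial map, and the entire difficulty of the result lives in the norm computations already carried out. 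I would write it as a short direct verification.
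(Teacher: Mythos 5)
Your proposal is correct and takes essentially the same route as the paper, which likewise obtains the corollary by applying the displayed Taylor estimate with $p=m$ (resp.\ $p=n$), using the fact that $K_\chi$ and $d_\chi$ are polynomial maps so the expansion terminates with no remainder, and substituting the exact norm from Theorem \ref{main_thm} (resp.\ the upper bound from the immanant theorem) so that the $k!$ factors cancel. Your explicit remarks on the vanishing of derivatives of order above the degree and on the exact-value versus upper-bound asymmetry are sound and merely make explicit what the paper leaves implicit.
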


\medskip

{\bf Acknowledgements}. We would like to thank Professors Rajendra Bhatia and Jos\'e Dias da Silva for all the discussions about this subject, both about classical theory and the topics relating to this particular problem. 

\bibliographystyle{plain}

\end{document}